\theoremstyle{plain}
\newtheorem{theorem}{Theorem}[section]
\newtheorem{corollary}[theorem]{Corollary}
\newtheorem{proposition}[theorem]{Proposition}
\theoremstyle{definition}
\newtheorem{definition}[theorem]{Definition}
\newtheorem{subproblem}[theorem]{Subproblem}%
\theoremstyle{remark}
\newtheorem{remark}{Remark}
\newtheorem{assumption}{Assumption}%
\newcommand{\bx}{\bm{x}}
\newcommand{\y}{\bm{y}}
\newcommand{\dd}{\bm{d}}
\newcommand{\z}{\bm{z}}
\newcommand{\SSS}{\mathcal{S}}
\newcommand{\T}{\mathcal{T}}
\newcommand{\W}{\bm{W}}
\newcommand{\aaa}{\bm{a}}
\newcommand{\bbb}{\bm{b}}
\begin{document}

\articletype{}

\title{A New Two-dimensional Model-based Subspace Method for Large-scale Unconstrained Derivative-free Optimization: \texttt{2D-MoSub}}

\author{
\name{Pengcheng Xie\textsuperscript{a,b}\thanks{Pengcheng Xie. Email: xpc@lsec.cc.ac.cn} and Ya-xiang Yuan\textsuperscript{a,b}\thanks{Ya-xiang Yuan. Email: yyx@lsec.cc.ac.cn}}
\affil{\textsuperscript{a}State Key Laboratory of Scientific/Engineering Computing, Institute of Computational Mathematics and Scientific/Engineering Computing, Academy of Mathematics and Systems Science, Chinese Academy of Sciences, Zhong Guan Cun Donglu No.55, Beijing, China; \textsuperscript{b}University of Chinese Academy of Sciences, Zhong Guan Cun Donglu No.55, Beijing, China}
}

\maketitle

\begin{abstract}
This paper proposes the method \texttt{2D-MoSub} (2-dimensional model-based subspace method), which is a novel derivative-free optimization (DFO) method based on the subspace method for general unconstrained optimization and especially aims to solve large-scale DFO problems. Our method combines 2-dimensional quadratic interpolation models and trust-region techniques to iteratively update the points and explore the 2-dimensional subspace. Its framework includes initialization, constructing the interpolation set, building the quadratic interpolation model, performing trust-region trial steps, and updating the trust-region radius and subspace. We introduce the framework and computational details of \texttt{2D-MoSub}, and discuss the poisedness and quality of the interpolation set in the corresponding 2-dimensional subspace. We also analyze some properties of our method, including the model's approximation error with projection property and the algorithm's convergence. Numerical results demonstrate the effectiveness and efficiency of \texttt{2D-MoSub} for solving a variety of unconstrained optimization problems.
\end{abstract}

\begin{keywords}
derivative-free optimization; subspace; large-scale; interpolation model; trust region
\end{keywords}

\section{Introduction}

Optimization problems arise in numerous fields, and derivative-free optimization methods have gained attention due to their applicability in scenarios where derivative information is either unavailable or expensive to compute. For more details on derivative-free optimization, one can see the book of Conn, Scheinberg and Vicente \cite{conn2009introduction}, Audet and Hare's book \cite{audet2017derivative}, the survey paper of Larson, Menickelly and Wild \cite{larson_menickelly_wild_2019}, and Zhang's survey \cite{zhang2021}.

Model-based trust-region methods are well-established algorithms for nonlinear programming problems. Some of them are known for the reliance on quadratic models, which enable them to handle curvature information effectively. By minimizing quadratic models over trust regions, trust-region methods iteratively solve the optimization problems \cite{Winfield1969, Mattias2000,Powell2003,Conn2009a,Gratton-2017, cartis2019improving,xie2023h2}.

For existing techniques for model-based derivative-free optimization, the large-scale problems are still bottlenecks since the computational cost and interpolation error of constructing local (polynomial) models can be high when the dimension of the problem is high. One can regard this as the curse of dimensionality in the derivative-free optimization. Traditional derivative-free optimization methods use quadratic interpolation models to approximate the objective function, but the limited available information results in the low credibility of these models. Some methods have been developed to handle large-scale problems. One important approach is the use of subspace methods, which involves minimizing the objective function in a low-dimensional subspace at each iteration to obtain the next iteration point \cite{cartis2023scalable,yuan2007subspace,yuan2009subspace,yuan2014review}. 

\begin{sloppypar}
To solve the (large-scale) unconstrained derivative-free optimization problem
\[
\min _{\bx \in \Re^n}\ f(\bx),
\] 
this paper introduces a new derivative-free optimization method for large-scale black-box problems that leverages subspace techniques and quadratic interpolation models to efficiently search for the optimal solution. Our new method \texttt{2D-MoSub} holds a skeleton in the subspace method's type and uses 2-dimensional quadratic models in 2-dimensional subspaces iteratively. The released codes can be downloaded from the online repository\footnote{\href{https://github.com/PengchengXieLSEC/Large-Scale}{\texttt{https://github.com/PengchengXieLSEC/Large-Scale}}}. 
\end{sloppypar}

{\bf Organization.} The rest of this paper is organized as follows. Section \ref{section:2D-MoSub} introduces the framework and computational details of the new DFO method \texttt{2D-MoSub} and the coordinate transformation referring to the subspace. Section \ref{section:Interpolation set's poisedness and interpolation quality} discusses the poisedness and quality of the interpolation set. Section \ref{section:Some properties of 2D-MoSub} analyzes some properties of \texttt{2D-MoSub}, {including the approximation error with projection property and the convergence}. Numerical results for solving large-scale problems are shown in Section \ref{section:Numerical results}.

\section
{\texttt{2D-MoSub}}

\label{section:2D-MoSub}

In this section, we will briefly introduce the proposed method, \texttt{2D-MoSub}. Its framework is shown in Algorithm \ref{A new subspace derivative-free optimization method}.  

\begin{algorithm}[htbp]
 \fontsize{8}{7}\selectfont
\caption{Two-dimensional Model-based Subspace Method (\texttt{2D-MoSub})\label{A new subspace derivative-free optimization method}} 
\begin{algorithmic}[1]
  \setstretch{0.1} 

\State {\bf Input.} \(\bx_0 \in \Re^n, \Delta_1, \Delta_{\text{low}}, \gamma_{1},\gamma_{2},\eta,\eta_0\), \(\dd^{(1)}\in \Re^n\), \(k=1\).  

\State {\bf Step 0. (Initialization)} 

Obtain \(\y_{a},\y_{b},\y_{c}\) and \(\dd_1^{(1)}\). Construct the initial 1-dimensional quadratic model \(Q^{\text{sub}}_1\) on the 1-dimensional space \(\bx_{1}+\text{span}\{\dd_{1}^{(1)}\}\).

\State {\bf Step 1. (Constructing the interpolation set)}

{Obtain \(\dd_2^{(k)}\in \Re^n\) such that \(\left\langle\dd_1^{(k)}, \dd_2^{(k)}\right\rangle=0\). Obtain \(\y_1^{(k)},\y_2^{(k)},\y_3^{(k)}\).}

\State {\bf Step 2. (Constructing the  quadratic interpolation model)}

Construct the 2-dimensional quadratic model \(Q_k\) on the 2-dimensional space \(\bx_k+\text{span}\{\dd_{1}^{(k)}, \dd_{2}^{(k)}\}\).

\State {\bf Step 3. (Trust-region trial step)}

Solve the trust-region subproblem of \(Q_k\) and selectively solve the trust-region subproblem of a modified model \(Q_k^{\rm mod}\), and then obtain \({\bx}^{+}_k\). Compute 
\begin{equation}
\label{ratioofsuccessfulstep}
\rho_k=\frac{f({\bx}^{+}_k)-f(\bx_{k})}{Q_k({\bx}^{+}_k)-Q_k(\bx_{k})}. 
\end{equation} 
Update and obtain \({\bx}_{k+1}\) and \(\dd_1^{(k+1)}\). Go to {\bf Step 4}.

\State {\bf Step 4. (Updating)}

\begin{sloppypar}
If \(\Delta_k<\Delta_{\text{low}}\), then terminate. Otherwise, update \(\Delta_{k+1}\), and 
let 
\[
\dd_{1}^{(k+1)}=\frac{\bx_{k+1}-\bx_{k}}{\Vert \bx_{k+1}-\bx_{k} \Vert_2},
\] 
and construct \(Q^{\text{sub}}_{k+1}\) as the function \(Q_k\) on the 1-dimensional space \(\bx_{k+1}+\text{span}\{\dd_{1}^{(k+1)}\}\). 
Increment \(k\) by one and go to {\bf Step 1}.
\end{sloppypar}

\end{algorithmic} 
\end{algorithm}

We will introduce the details of each part of Algorithm \ref{A new subspace derivative-free optimization method} in the following respectively. Before further discussion, we give the following remark and definition. 

\begin{remark}
\begin{sloppypar}
In our method, the points in \({\Re}^n\) will be selectively regarded as the points in a 1-dimensional subspace or the points in a 2-dimensional subspace when we discuss the corresponding 1-dimensional interpolation model and 2-dimensional interpolation model. 
\end{sloppypar}
\end{remark}

Therefore, we define the following transformations to achieve the coordinate transformation from the 2-dimensional subspace \(\SSS_{\aaa,\bbb}^{(k)}=\bx_k+\operatorname{span}\left\{\aaa, \bbb\right\}\) and the 1-dimensional subspace \(\hat{\mathcal{S}}_{\aaa}^{(k)}=\bx_k+\operatorname{span}\left\{\aaa\right\}\) to \(\Re^2\) or \(\Re\) separately, given \(\aaa\in\Re^{n}\) and \(\bbb\in\Re^{n}\).

\begin{definition}

Let \(\T_{\aaa,\bbb}^{(k)}\) be the transformation from \(\SSS_{\aaa,\bbb}^{(k)}\) to \(\Re^2\), defined by  
\begin{equation*}
\T_{\aaa,\bbb}^{(k)}: \ \y  \mapsto\left(\left\langle \y-\bx_k, \aaa\right\rangle, \left\langle \y-\bx_k, \bbb\right\rangle\right)^{\top}.
\end{equation*} 
Let \(\hat{\T}_{\aaa}^{(k)}\) be the transformation from \(\hat{\SSS}_{\aaa}^{(k)}\) to \(\Re\), defined by  
\(
 \hat{\T}_{\aaa}^{(k)}: \ 
 \y \mapsto\left\langle \y-\bx_k, \aaa\right\rangle.
\)
\end{definition}

\subsection{Initialization}

The algorithm \texttt{2D-MoSub} starts by initializing the input parameters and vectors. It first constructs the initial 1-dimensional quadratic interpolation model \(Q_1^{\text{sub}}\). To initiate the algorithm, we begin by initializing it with the initial point $\bm{x}_0$ and setting various parameters: trust-region parameters $\Delta_1$ and $\Delta_{\text{low}}$, $\gamma_{1}$ and $\gamma_{2}$, as well as successful step thresholds $\eta$ and $\eta_0$. $\bm{d}^{(1)}$ can be any direction in \(\Re^{n}\). For example, we can let $\bm{d}^{(1)}=(1,0,\dots,0)^{\top}$. A large number of numerical experiments show that the choice of \(\bm{d}^{(1)}\) does not have an essential influence.

\begin{figure*}[htb]
\begin{mdframed}[
    linewidth=0.8pt,
    hidealllines=true,
    topline=true,
    bottomline=true,
    frametitle={Step 0. (Initialization)}] 
    \fontsize{8}{7}\selectfont
      \setstretch{0.1} 
\begin{algorithmic}[1]
\State \textbf{Input:} Initialize  with the initial point $\bm{x}_0$, trust-region parameters $\Delta_1$, $\Delta_{\text{low}}$, $\gamma_{1}$, $\gamma_{2}$, and thresholds $\eta$ and $\eta_0$. Choose a $\bm{d}^{(1)}\in \Re^{n}$.
\State Obtain three points: $\bm{y}_a = \bm{x}_0$, $\bm{y}_b = \bm{x}_0 + \Delta_1 \bm{d}^{(1)}$, and $\bm{y}_c$ based on the relative values of $f(\bm{y}_a)$ and $f(\bm{y}_b)$, i.e.,
\begin{equation}
\label{yc1}
\y_{c}=\left\{
\begin{aligned}
&\y_{a} + 2\Delta_1 \dd^{(1)}, \text{ if }f(\y_{a})\le f(\y_{b}),\\
&\y_{a} - {\Delta_1} \dd^{(1)}, \text{ otherwise}. 
\end{aligned}
\right.
\end{equation}
\State Let $\bm{x}_{1}$ be the minimizer of $f(\bm{y})$ among $\bm{y}_a$, $\bm{y}_b$, and $\bm{y}_c$:
\[
\bx_{1}=\arg\min_{\y\in\{\y_{a},\y_{b},\y_{c}\}}\ f(\y).
\]

\State Let $\bm{y}_{\max,1}^{(1)}$ be the maximizer of $f(\bm{y})$ among $\bm{y}_a$, $\bm{y}_b$, and $\bm{y}_c$:
\[
\y_{\max,1}^{(1)}=\underset{\y\in\{\y_{a},\y_{b},\y_{c}\}}{\arg\max} f(\y). 
\]

\State Determine $\bm{d}^{(1)}_1$ as the normalized vector from $\bm{x}_1$ to $\bm{y}_{\max,1}^{(1)}$:
\[
\dd^{(1)}_1=\frac{\bx_1-\y_{\max,1}^{(1)}}{\Vert \bx_1-\y_{\max,1}^{(1)}\Vert_2}.
\]

\State Construct the initial 1-dimensional quadratic interpolation model \(Q^{\text{sub}}_1\) on the 1-dimensional space \(\bx_{1}+\text{span}\{\dd_{1}^{(1)}\}\) as
\begin{equation}
\label{Q1sub}
Q_1^{\text{sub}}(\alpha)=f(\bx_1)+a^{(1)} \alpha +b^{(1)} \alpha^2,
\end{equation}
where \(a^{(1)}, b^{(1)} \in \Re\) are determined by the interpolation conditions 
\begin{equation}
\label{Q1subinter}
Q_1^{\text{sub}}(\hat{\T}_{\dd_1^{(1)}}^{(1)}(\y))=f(\y), \ \forall \ \y \in \{\y_{a},\y_{b},\y_{c}\}.
\end{equation}

\end{algorithmic}
\end{mdframed}
\end{figure*}

Once the parameters' initialization above is complete, \texttt{2D-MoSub} obtains three points: $\bm{y}_a = \bm{x}_0$, $\bm{y}_b = \bm{x}_0 + \Delta_1 \bm{d}^{(1)}$, and $\bm{y}_c$, which is determined by (\ref{yc1}) based on the relative values of $f(\bm{y}_a)$ and $f(\bm{y}_b)$. With the points above established, \texttt{2D-MoSub} finds $\bm{x}_{1}$, which serves as the minimizer of $f$ among $\bm{y}_a$, $\bm{y}_b$, and $\bm{y}_c$. Simultaneously, \texttt{2D-MoSub} calculates $\bm{y}_{\max,1}^{(1)}$, which represents the maximizer of $f$ among the same set of points $\bm{y}_a$, $\bm{y}_b$, and $\bm{y}_c$. Notice that we will make sure that $\bm{x}_1\ne \bm{y}_{\max,1}^{(1)}$ in the case where they share the same function value. Using $\bm{x}_1$ and $\bm{y}_{\max,1}^{(1)}$, we determine $\bm{d}^{(1)}_1$ as the normalized vector pointing from $\bm{x}_1$ to $\bm{y}_{\max,1}^{(1)}$. Finally, \texttt{2D-MoSub} constructs the initial 1-dimensional quadratic interpolation model $Q^{\text{sub}}_1$ as (\ref{Q1sub}) on the 1-dimensional space $\bm{x}_{1}+\text{span}\{\bm{d}_{1}^{(1)}\}$ according to the interpolation condition (\ref{Q1subinter}). The details are shown in the algorithmic pseudo-code of the initialization step numerated as Step 0 of \texttt{2D-MoSub}.

\subsection{Constructing the interpolation set}
Before achieving the interpolation in the 2-dimensional subspace, a unit vector orthogonal to the previously used direction \(\dd_1^{(k)}\) is chosen by \texttt{2D-MoSub} to construct the interpolation set. \texttt{2D-MoSub} evaluates the objective function at selected points. After selecting a unit vector $\bm{d}_2^{(k)}$ orthogonal to $\bm{d}_1^{(k)}$, we set $\bm{y}_1^{(k)}$, $\bm{y}_2^{(k)}$, $\bm{y}_3^{(k)}$ as (\ref{y1k}), (\ref{y2k}) and (\ref{y3k}). Details are shown in the following algorithmic pseudo-code of the interpolation set constructing step numerated as Step 1 of \texttt{2D-MoSub}. The unit vector \(\dd_2^{(k)}\) is determined randomly. Fig. \ref{initial_case_fig} shows the initial case and the subspace \(\bm{x}_{1}+\text{span}\{\dd_1^{(1)},\dd_2^{(1)}\}\), in which it illustrates the case when \(\y_{\text{min},2}^{(1)}=\y_{1}^{(1)}\).

\begin{figure*}[htbp]
\begin{center}
\begin{mdframed}[
    linewidth=0.8pt,
    hidealllines=true,
    topline=true,
    bottomline=true,
    frametitle={Step 1. (Constructing the interpolation set)}]
 \fontsize{8}{7}\selectfont
      \setstretch{0.1} 
\begin{algorithmic}[1]
 \State \textbf{Input:}  $\bm{x}_k$, $\bm{d}_1^{(k)}$, $\Delta_k$
    
    \State {\color{black}Choose a unit vector $\bm{d}_2^{(k)}$ such that \(\left\langle\dd_1^{(k)}, \dd_2^{(k)}\right\rangle=0\).}

    \State Set \(\y_1^{(k)}\) by
     \begin{equation}
     \label{y1k}
     \bm{y}_1^{(k)} = \bm{x}_k + \Delta_k \bm{d}_2^{(k)}.
     \end{equation}
    
    \State Determine $\bm{y}_2^{(k)}$ based on the relative values of $f(\bm{y}_1^{(k)})$ and $f(\bm{x}_k)$: 
    \begin{equation}
    \label{y2k}
\y_2^{(k)}=
\left\{
\begin{aligned}
&\bx_k+ 2\Delta_k \dd_2^{(k)}, \text{ if } f(\y_1^{(k)})\le f(\bx_k),\\
&\bx_k - \Delta_k \dd_2^{(k)}, \text{ otherwise}.
\end{aligned}
\right.
    \end{equation}
    
    \State Find $\bm{y}_{\min,2}^{(k)}$ as the minimizer of $f$ among $\bm{y}_1^{(k)}$ and $\bm{y}_2^{(k)}$:
    \[
\y_{\min,2}^{(k)}=\arg\min_{\y\in\{\y_1^{(k)},\y_2^{(k)}\}}\ f(\y). 
\]
    
    \State Set \(\bm{y}_{3}^{(k)}\) by 
\begin{equation}
\label{y3k}
\bm{y}_{3}^{(k)} = \bm{y}_{\min,2}^{(k)} + \Delta_k \bm{d}_1^{(k)}. 
\end{equation}

\end{algorithmic}
\end{mdframed}
\end{center}
\end{figure*}

\begin{figure}[H]
\centering 
\fbox{
\begin{tikzpicture}[scale=0.8]

\coordinate (ya) at (2,0);
\coordinate (yb) at (0,0);
\coordinate (yc) at (4,0);
\coordinate (y1) at (4,2);
\coordinate (y2) at (4,-2);
\coordinate (y3) at (6,2);

\draw (ya) circle [radius=2.5pt] node[above left] {$\bm{x}_{0}$};
\fill (yb) circle [radius=0pt] node[above left] {$\bm{y}_{\max,1}^{(1)}$};
\draw (yc) circle [radius=2.5pt] node[above left] {$\bm{x}_{1}$};
\fill (ya) circle [radius=1.5pt] node[below left] {$\bm{y}_a$};
\fill (yb) circle [radius=1.5pt] node[below left] {$\bm{y}_b$};
\fill (yc) circle [radius=1.5pt] node[below left] {$\bm{y}_c$};
\fill (y1) circle [radius=1.5pt] node[below left] {$\bm{y}_1^{(1)}$};
\fill (y2) circle [radius=1.5pt] node[above left] {$\bm{y}_2^{(1)}$};
\fill (y3) circle [radius=1.5pt] node[below left] {$\bm{y}_3^{(1)}$};

\draw[dashed] (yb) -- (ya) -- (yc);
\draw[dashed] (y2) -- (y1) -- (y3);

\draw[->,thick] (yc) -- (5,0) node[below left] {$\bm{d}_1^{(1)}$};
\draw[->,thick] (yc) -- (4,1) node[below right] {$\bm{d}_2^{(1)}$};


\end{tikzpicture}
}
\caption{The initial case and the subspace $\bm{x}_{1}+\text{span}\{\dd_1^{(1)},\dd_2^{(1)}\}$\label{initial_case_fig}}
\end{figure}

\subsection{Constructing the quadratic interpolation models}

A 2-dimensional quadratic interpolation model \(Q_k\) will be constructed based on the interpolation set in the 2-dimensional subspace \(\bx_{k}+\text{span}\{\dd_1^{(k)},\dd_2^{(k)}\}\) and the 1-dimensional quadratic interpolation model \(Q_{k}^{\text{sub}}\). The model \(Q_k\) will approximate the objective function in the current 2-dimensional subspace spanned by the selected directions. In addition, if the iteration point given by minimizing the model \(Q_k\) in the trust region does not achieve a sufficiently low function value, our method will construct the modified model \(Q_k^{\rm mod}\) based on the existed evaluated points. We will separately give the details of constructing the models.

\subsubsection[Obtaining Qk based on Qksub]{Obtaining \(Q_k\) based on \(Q_k^{\text{sub}}\)}

After taking the discussion on the initial  1-dimensional model \(Q^{\text{sub}}_{1}\) as (\ref{Q1sub}) according to the interpolation conditions (\ref{Q1subinter}), we introduce the iterative way how we obtain the \(k\)-th model \(Q_k\) based on \(Q_k^{\text{sub}}\).

We construct the 2-dimensional quadratic interpolation model \(Q_k\) on the 2-dimensional subspace \(\bm{x}_k + \text{span}\{\bm{d}_1^{(k)}, \bm{d}_2^{(k)}\}\) as 
\begin{equation}
\label{model}
Q_k(\alpha,\beta) = f(\bm{x}_k) + a^{(k)} \alpha + b^{(k)} \alpha^2 + c^{(k)} \beta + d^{(k)} \beta^2 + e^{(k)} \alpha \beta,
\end{equation} 
where \(a^{(k)}\) and \(b^{(k)}\) are given by \(Q_k^{\text{sub}}\), and 
\(c^{(k)}, d^{(k)}\) and \(e^{(k)}\) are determined by the interpolation conditions  
\begin{equation}
\label{interpolation-cond}
Q_k(\T_{\dd_1^{(k)},\dd_2^{(k)}}^{(k)}(\z)) = f(\z), \ \forall \ \z \in \{\bm{y}_{1}^{(k)},\bm{y}_{2}^{(k)},\bm{y}_{3}^{(k)}\}. 
\end{equation}

Details are shown in the algorithmic pseudo-code of the quadratic interpolation model constructing step numerated as Step 2 of \texttt{2D-MoSub}.
\begin{figure*}[htbp]
\begin{center}
\begin{mdframed}[
    linewidth=0.8pt,
    hidealllines=true,
    topline=true,
    bottomline=true,
    frametitle={Step 2. (Constructing the quadratic interpolation model)}]
     \fontsize{8}{7}\selectfont
      \setstretch{0.1}  
\begin{algorithmic}[1]
  \State Construct the 2-dimensional quadratic interpolation model \(Q_k\) on the 2-dimensional space \(\bm{x}_k + \text{span}\{\bm{d}_1^{(k)}, \bm{d}_2^{(k)}\}\) as (\ref{model}), where \(c^{(k)}, d^{(k)},\) and \(e^{(k)}\) are determined by the interpolation conditions (\ref{interpolation-cond}).
\end{algorithmic}
\end{mdframed}
\end{center}
\end{figure*}

The way obtaining \(Q_k\) is reasonable and reliable since \(\bx_{k}-\bx_{k-1}\) provides a sufficiently good direction and corresponding 1-dimensional subspace numerically when \(\bx_{k}\) is a successful trust-region trial step, and the model \(Q_{k-1}^+\) in the previous 2-dimensional subspace is a sufficiently good approximation along such a 1-dimensional subspace as well. This 1-dimensional subspace is the intersection of the \((k-1)\)-th 2-dimensional subspace and the \(k\)-th 2-dimensional subspace in most cases. In other words, \(Q_{k}\) follows a kind of optimality agreement with the previous model in the corresponding 1-dimensional subspace.

\subsubsection[Obtaining Qk1sub based on Qk]{Obtaining \(Q_{k+1}^{\text{sub}}\) based on \(Q_{k}\)}
\label{conditionsofQk+section}

We now introduce how our algorithm obtains the \((k+1)\)-th model function on the 1-dimensional subspace, \(Q^{\text{sub}}_{k+1}\), based on the \(k\)-th model function \(Q_k\). 
At the \(k\)-th step, we already have the model function \(Q_k\) as (\ref{model}) with determined coefficients. 
However, after we obtain the iteration point \(\bx_{k+1}\) and the function value \(f(\bx_{k+1})\), it is usual that
\begin{equation*}
Q_k(\T_{\dd_1^{(k)},\dd_2^{(k)}}^{(k)}(\bx_{k+1}))\ne f(\bx_{k+1}),  
\end{equation*}
which indicates that \(Q_k\) is not a perfect interpolation of \(f\) in the interval between \(\bx_{k}\) and \(\bx_{k+1}\). Therefore, before obtaining the model \(Q_{k+1}^{\text{sub}}\), we first update \(Q_k\) to the following \(Q_k^{+}\), which is
\begin{equation*}
Q_k^{+}(\alpha,\beta)=f(\bx_{k+1})+\bar a^{(k)} \alpha+\bar{b}^{(k)} \alpha^2+\bar c^{(k)} \beta+\bar{d}^{(k)} \beta^2+\bar e^{(k)} \alpha \beta,
\end{equation*}
and it satisfies the interpolation conditions 
\begin{equation*}
Q_k^{+}(\T_{\dd_1^{(k+1)},\dd_*^{(k+1)}}^{(k+1)}(\bm{z}))=f(\bm{z}), \ \forall\ 
\z \in\left\{\bx_{k-1}, \bx_k, \bx_{k+1}, \y_1^{(k)}, \y_2^{(k)}, \y_3^{(k)}\right\},
\end{equation*}
where \(\dd_{*}^{(k+1)}\in \text{span}\{\dd_1^{(k)},\dd_2^{(k)}\}\) satisfies that \(\left\langle\dd_{*}^{(k+1)},\dd_{1}^{(k+1)}\right\rangle=0\). 
{
We show more details about constructing the quadratic interpolation model \(Q_k^{+}\) in the following. 

Notice that in some cases, the set of the interpolation points is not poised\footnote{The sense of poised (or poisedness) is as same as the one here when discussing the fully interpolation.} in the sense where the interpolation equations do not have a solution (do not provide a unique model). Therefore, \texttt{2D-MoSub} will check if the corresponding coefficient matrix of each system of interpolation equations is invertible or not. If the current interpolation set is not poised in the above sense, then the method \texttt{2D-MoSub} has different interpolation sets in reserve, and it will test the corresponding invertibility of the set and use a poised set to obtain the quadratic model \(Q_k^{+}\) by solving the  interpolation equations. The sets of interpolation points in reserve are formed by 6 points chosen from the set 
\[
\mathcal{Y}^+_k=\{\bx_{k-1},\bx_{k},\bx_{k+1},\y_{1}^{(k)},\y_{2}^{(k)},\y_{3}^{(k)},\y_{4}^{(k)},\y_{5}^{(k)}\},
\]
where 
\( 
\y^{(k)}_4=\bx_{k}+\frac{\sqrt{2}}{2}\Delta_k \dd_1^{(k)}+\frac{\sqrt{2}}{2}\Delta_k \dd_2^{(k)}, 
\) 
and 
\(   
\y^{(k)}_5=\bx_{k}+\Delta_k {\dd_1^{(k)}}. 
\)   
Notice that the points \(\bx_{k},\y_{1}^{(k)},\y_{2}^{(k)},\y_{3}^{(k)},\y_{4}^{(k)}\) and \(\y_{5}^{(k)}\) have a fixed distribution, and thus such a choice works. Therefore, we obtain the \(Q_{k}^+\) according to the interpolation conditions 
\begin{equation}
\label{Q+condconclude}
Q_k^{+}(\T_{\dd_1^{(k+1)},\dd_*^{(k+1)}}^{(k+1)}(\bm{z}))=f(\bm{z}), \ \forall\ 
\z \in \mathcal{Y},
\end{equation}
where \(\mathcal{Y}\subset \mathcal{Y}^+_k\) and \(\mathcal{Y}\) is a set with 6 poised interpolation points.

The modification above is necessary to make the interpolation set to be poised for constructing the quadratic model \(Q_k^{+}\). Thus, in all the cases, we can obtain the 2-dimensional quadratic model \(Q_k^{+}\). Then the 1-dimensional model \(Q_{k+1}^{\text{sub}}\) is set as  
\begin{equation}
\label{Qk+1subconditon}
Q_{k+1}^{\text{sub}}(\alpha)=Q_k^{+}(\alpha,0),\ \forall\ \alpha\in\Re. 
\end{equation}

\begin{sloppypar}
{\texttt{2D-MoSub} saves the computational cost since it does not need a programmed subroutine to provide the model-improvement step in the traditional algorithms.}
\end{sloppypar} 

\begin{remark}
There is also another way to obtain the modified model \(Q_k^{+}\) at the successful step, which follows the least norm updating way based on the updated interpolation set \(\mathcal{X}_{k}^{+}=\mathcal{X}_{k}\cup \{\bx_{k+1}\}\backslash \{\bx_{k-1}\}\), and we only need to solve the corresponding Karush–Kuhn–Tucker equations of the least norm updating model's subproblem.  
\end{remark}

\subsubsection[Obtaining Qkmod based on Qk]{Obtaining \(Q_{k}^{\rm mod}\) based on \(Q_{k}\)}

If the iteration point \(\bx_k^{+}\) given by solving the trust-region subproblem of the model function \(Q_k\) is not good enough in the sense of achieving the decrement of the function value in the case where \(\bx_{k}^{+}\notin\{\bx_{k-1}, \bx_k, \bm{y}_{1}^{(k)},\bm{y}_{2}^{(k)},\bm{y}_{3}^{(k)}\}\), our method will re-solve the trust-region subproblem of the modified quadratic model \(Q_k^{\rm mod}\). The modified model \(Q_k^{\rm mod}\) is constructed by the interpolation conditions 
\begin{equation}
\label{intercondQkmod}
Q_k^{\rm mod}(\T_{\dd_1^{(k)},\dd_2^{(k)}}^{(k)}(\z)) = f(\z), \ \forall \ \z \in \mathcal{Y}_k^{\rm mod},
\end{equation}
where
\[
\mathcal{Y}_k^{\rm mod}=
\left\{
\begin{aligned}
&\{\bx_{k-1}, \bx_k, \bx_{k}^{+}, \bm{y}_{1}^{(k)},\bm{y}_{2}^{(k)},\bm{y}_{3}^{(k)}\}, \ \text{if \(\bx_k\ne \bx_{k-1}\)},\\
&\{\bx_k, \bx_{k}^{+}, \bm{y}_{1}^{(k)},\bm{y}_{2}^{(k)},\bm{y}_{3}^{(k)}, \bm{y}_{4}^{(k)}\}, \ \text{if \(\bx_k=\bx_{k-1}\) and \(\bx_k^+\ne \bm{y}_{4}^{(k)}\)},\\
&\{\bx_k, \bx_{k}^{+}, \bm{y}_{1}^{(k)},\bm{y}_{2}^{(k)},\bm{y}_{3}^{(k)}, \bm{y}_{5}^{(k)}\}, \ \text{otherwise}.
\end{aligned}
\right. 
\]
Notice that the function values at all of the above interpolation points are already evaluated, and hence, such an interpolation does not cost more function evaluations. 


The quadratic models are important for the property of the iteration obtained by solving the 2-dimensional trust-region subproblems. To summarize the results above, we give the following remarks about how our method obtains the quadratic models. Table \ref{intcondtable} gives the interpolation conditions for all models.

\begin{table}[htbp]
\centering
 \fontsize{10}{11}\selectfont
\caption{Interpolation conditions for models used in \texttt{2D-MoSub}\label{intcondtable}}
\begin{tabular}{lll}
\toprule 
Model & Dim. & 
{Interpolation conditions}\\
\midrule 
$Q_k^{\rm sub}$ & 1 &
\(
Q_k^{\text{sub}}(\alpha)=Q_{k-1}^{+}(\alpha,0)
\) 
\\
\(Q_k\) & 2 
&
\(
Q_k(\T_{\dd_1^{(k)},\dd_2^{(k)}}^{(k)}(\z))=f(\z), \ \forall \ \z \in \{\y_1^{(k)},\y_2^{(k)},\y_3^{(k)}\} \ \&\  
Q_k(\alpha, 0)=Q_{k}^{\text{sub}}(\alpha)
\) \\
$Q_k^{+}$ & 2  & 
\(
Q_k^{+}(\T_{\dd_1^{(k+1)},\dd_*^{(k+1)}}^{(k+1)}(\bm{z}))=f(\bm{z}), \ \forall\ 
\z \in \mathcal{Y},\ \text{where \(\mathcal{Y}\subset \mathcal{Y}^+_k\) and \(\vert\mathcal{Y}\vert=6\)}
\) 
\\
$Q_k^{\text{mod}}$ & 2 
&
\(
Q_k^{\rm mod}(\T_{\dd_1^{(k)},\dd_2^{(k)}}^{(k)}(\bm{z}))=f(\bm{z}), \ \forall\ 
\z \in \mathcal{Y}_k^{\rm mod}  
\) 
\\
\bottomrule 
\end{tabular}
\end{table}

Considering that normally \(\dd_1^{(k)}=\frac{\bx_k-\bx_{k-1}}{\left\|\bx_k-\bx_{k-1}\right\|_2}\), and it is approximately a gradient descent direction, it can be seen that the new model \(Q_k\) inherits the good property of the models \(Q_{k-1}\) and \(Q_{k-1}^{+}\)
along the approximate gradient descent direction. Besides, it also uses the function value information at 3 new interpolation points in the \(k\)-th subspace, which at least forms a model with a fully linear property.

\subsection{Trust-region trial step}

\texttt{2D-MoSub} solves a 2-dimensional trust-region subproblem to find the optimal trial step within the trust region. It then evaluates the trial step's quality using the ratio of the function value improvement to the model value improvement. According to the ratio and pre-defined thresholds, the algorithm updates the subspace, interpolation set, trust-region parameters, and solution.

\begin{figure*}
\begin{center}
\begin{mdframed}[
    linewidth=0.8pt,
    hidealllines=true,
    topline=true,
    bottomline=true,
    frametitle={Step 3. (Trust-region trial step)}]
 \fontsize{8}{7}\selectfont
      \setstretch{0.1} 
\begin{algorithmic}[1]
  \State Solve the trust-region subproblem
\[
\begin{aligned}
\min_{\alpha,\beta} \ &Q_k(\alpha,\beta)\\
\text{\rm subject to} \ & \alpha^2+\beta^2\leq \Delta_k^2,
\end{aligned}
\]
and obtain \(\alpha^{(k)}\) and \(\beta^{(k)}\). Then let 
\[ 
\begin{aligned}
{\bx}^{\rm pre}_k&=\bx_k+\alpha^{(k)} \dd_1^{(k)}+\beta^{(k)} \dd_2^{(k)},\\
\bx^+_k&=\min_{\bx\in\{{\bx}_k, {\bx}^{\rm pre}_k,\y_1^{(k)},\y_2^{(k)},\y_3^{(k)}\}}\ f(\bx). 
\end{aligned}
\] 

\State If \(\bx_{k}^+\in \{\bx_k,\bx_{k-1}\}\), then let 
\(
\bx_{k+1}=\bx_k, 
\) 
\(\Delta_{k+1}=\Delta_k\) instead of (\ref{updatingDelta}), 
and 
\(
\dd_1^{(k+1)}=\dd_1^{(k)}
\) 
instead of (\ref{updatingofdd1}), 
and go to {\bf Step 4}.

\State  Otherwise, compute 
\[
\rho_k=\frac{f({\bx}^{+}_k)-f(\bx_{k})}{Q_k(\T_{\dd_1^{(k)},\dd_2^{(k)}}^{(k)}(\bx_k^+))-Q_k(0,0)}. 
\] 
\State If \(\rho_k \geq \eta\) or \({\bx}^{+}_k\in\{\y_1^{(k)}, \y_2^{(k)}, \y_3^{(k)}\}\), then let \({\bx}_{k+1}={\bx}^{+}_k\) and go to {\bf Step 4}. 

\State Otherwise, obtain the modified model \(Q_k^{\rm mod}\) by (\ref{intercondQkmod}) and solve the trust-region subproblem
\[
\begin{aligned}
\min_{\alpha,\beta} \quad &Q_k^{\rm mod}(\alpha,\beta)\\
\text{\rm subject to} \quad & \alpha^2+\beta^2\leq \Delta_k^2,
\end{aligned}
\]
and obtain \(\alpha^{(k,{\rm mod})}\) and \(\beta^{(k,{\rm mod})}\). Then 
\[
{\bx}^{\rm mod}_k=\bx_k+\alpha^{(k,{\rm mod})} \dd_1^{(k)}+\beta^{(k,{\rm mod})} \dd_2^{(k)}.
\]

\State If \(\bx_{k}^{\rm mod}\in \{\bx_k,\bx_{k-1}\}\), then let 
\(
\bx_{k+1}=\bx_k, 
\) 
\(\Delta_{k+1}=\Delta_k\) instead of (\ref{updatingDelta}), 
and 
\(
\dd_1^{(k+1)}=\dd_1^{(k)}
\) 
instead of (\ref{updatingofdd1}), 
and go to {\bf Step 4}. 

\State Otherwise, set
\[
\bx_{k}^{+}=\arg\min_{\bx\in \{\bx_{k}^{+},\bx_{k}^{\rm mod}\}}\ f(\bx). 
\]
Compute 
\[
\rho_k=\frac{f({\bx}^{+}_k)-f(\bx_{k})}{Q_k(\T_{\dd_1^{(k)},\dd_2^{(k)}}^{(k)}(\bx_k^+))-Q_k(\bx_{k})}. 
\]

\State  If \(\rho_k \geq \eta_0\), then \(\bx_{k+1}=\bx_k^+\) and go to {\bf Step 4}.

\State Otherwise, let 
\(
\bx_{k+1}=\bx_k, 
\)
and 
\(
\dd_1^{(k+1)}=\dd_1^{(k)}
\) 
instead of (\ref{updatingofdd1}), 
and go to {\bf Step 4}.

\end{algorithmic}
\end{mdframed}
\end{center}
\end{figure*}

In the trust-region trial step, we iteratively refine the solution by basically performing the following actions. First, we solve the trust-region subproblem to find the minimizer  of the quadratic model \(Q_k(\alpha, \beta)\) subject to the trust-region constraint. {Next, we calculate the predicted function value improvement based on \(Q_k\) and the actual function improvement based on \(f\). Then, we compute the ratio between the actual function improvement and the predicted function value improvement and compare it with a pre-defined threshold. Then we update \(\Delta_k\) and \(\bm{d}_1^{(k)}\) accordingly.} Step 3 includes more details, such as the use of \(Q_k^{\rm mod}\). The above statement only shows the basic idea of the trust-region trial step. Other details are in the algorithmic pseudo-code. 

By following these steps and making necessary adjustments based on the predicted and actual function-value improvements, we aim to iteratively refine the solution within the trust-region framework until the convergence is achieved. In the trust-region trial step, we perform as the algorithmic pseudo-code of Step 3 shows. The trust-region subproblem is solved by the truncated conjugated gradient method \cite{steihaug1983conjugate,toint1981towards} in the test implementation.

\subsection{Updating the trust-region radius and subspace}

Similar with the traditional trust-region methods, the trust-region radius of \texttt{2D-MoSub} is updated based on the trial step's quality. If the radius falls below a lower threshold, \texttt{2D-MoSub} terminates. Otherwise, the subspace is updated by calculating the new direction based on the updated solution. \texttt{2D-MoSub} constructs a new 1-dimensional quadratic interpolation model in the updated subspace. The details are shown in the algorithmic pseudo-code of the updating step numerated as Step 4 of \texttt{2D-MoSub}. 

\begin{figure*}
\begin{center}
\begin{mdframed}[
    linewidth=0.8pt,
    hidealllines=true,
    topline=true,
    bottomline=true,
    frametitle={Step 4. (Updating)}]
 \fontsize{8}{7}\selectfont
      \setstretch{0.1} 
\begin{algorithmic}[1]

\State If \(\Delta_k<\Delta_{\text{low}}\), then terminate. 

\State Otherwise,  update \(\Delta_{k+1}\) by 
\begin{equation}
\label{updatingDelta}
\Delta_{k+1} = 
\left\{
\begin{aligned}
\gamma_1 \Delta_k, & \text { if } \rho_k \geq \eta, \\ 
\gamma_2 \Delta_k, & \text { otherwise}.
 \end{aligned}
\right.
\end{equation}  
\State Let 
\begin{equation}
\label{updatingofdd1}
\dd_{1}^{(k+1)}=\frac{\bx_{k+1}-\bx_{k}}{\Vert \bx_{k+1}-\bx_{k} \Vert_2}.
\end{equation} 

\State Update \(Q_k\) to \(Q_k^{+}\) according to the interpolation conditions (\ref{Q+condconclude}). 

\State Obtain the 1-dimensional model \(Q_{k+1}^{\text{sub}}\) that satisfies (\ref{Qk+1subconditon}). 

\State Increment \(k\) by one and go to {\bf Step 1}.
\end{algorithmic}
\end{mdframed}
\end{center}
\end{figure*}

Fig. \ref{The_iterative_case_at_k} shows the iterative case at the \(k\)-th step of \texttt{2D-MoSub}.

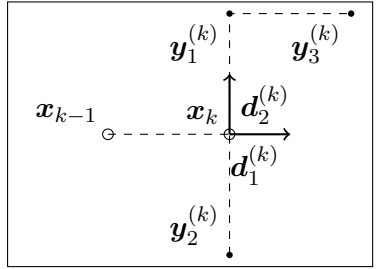
\begin{figure}[h]
\centering 
\fbox{
\begin{tikzpicture}[scale=0.8]

\coordinate (xk) at (2,0);
\coordinate (xk1) at (4,0);
\coordinate (y1) at (4,2);
\coordinate (y2) at (4,-2);
\coordinate (y3) at (6,2);

\draw (xk) circle [radius=2.5pt] node[above left] {$\bm{x}_{k-1}$};
\draw (xk1) circle [radius=2.5pt] node[above left] {$\bm{x}_{k}$};
\fill (y1) circle [radius=1.5pt] node[below left] {$\bm{y}_1^{(k)}$};
\fill (y2) circle [radius=1.5pt] node[above left] {$\bm{y}_2^{(k)}$};
\fill (y3) circle [radius=1.5pt] node[below left] {$\bm{y}_3^{(k)}$};

\draw[dashed] (xk) -- (xk1);
\draw[dashed] (y2) -- (y1) -- (y3);

\draw[->,thick] (xk1) -- (5,0) node[below left] {$\bm{d}_1^{(k)}$};
\draw[->,thick] (xk1) -- (4,1) node[below right] {$\bm{d}_2^{(k)}$};


\end{tikzpicture}
}
\caption{The iterative case at the \(k\)-th step and the subspace $\bm{x}_{k}+\text{span}\{\dd_1^{(k)},\dd_2^{(k)}\}$\label{The_iterative_case_at_k}}
\end{figure}

\section{Poisedness and quality of the interpolation set}
\label{section:Interpolation set's poisedness and interpolation quality}

{We give the discussion and analysis of the poisedness and quality of the interpolation set when constructing the quadratic interpolation model \(Q_k\) at each step in this section. As we have noticed, the interpolation model's quality in a region of interest is determined by the position of the interpolation points. For example, if a model \(Q\) interpolates the objective function \(f\) at points far away from a certain region of interest, the model value may differ greatly from the value of the objective function \(f\) in that region. The \(\Lambda\)-poisedness is a concept to measure how well a set of points is distributed, and ultimately how well an interpolation model will estimate the objective function.

The most commonly used metric for quantifying how well points are positioned in a region of interest is based on Lagrange polynomials. Given a set of \(p\) points \(\mathcal{Y}=\left\{\bm{y}_1, \dots, \bm{y}_p\right\}\), a basis of Lagrange polynomials satisfies 
\begin{equation*}
\ell_j(\bm{y}_i)= 
\left\{
\begin{aligned}
1, & \text{ if } i=j, \\ 
0, & \text{ otherwise}.
\end{aligned}
\right. 
\end{equation*}

In the following, we present the classic definition of \(\Lambda\)-poisedness \cite{conn2009introduction}.

\begin{definition}[\(\Lambda\)-poisedness] A set of points \(\mathcal{Y}\) is said to be \(\Lambda\)-poised on a set \(\mathcal{B}\) if \(\mathcal{Y}\) is linearly independent and the Lagrange polynomials \(\left\{\ell_1, \dots, \ell_p\right\}\) associated with \(\mathcal{Y}\) satisfy
\begin{equation*}
\Lambda \geq \max _{1 \leq i \leq p} \max _{\bm{x}\in\mathcal{B}}\left|\ell_i(\bm{x})\right| . 
\end{equation*}

\end{definition}

In our case, considering that the new 2-dimensional model \(Q_k\) formed by our algorithm has 3 fixed coefficients and 3 undetermined coefficients waiting to be determined by the interpolation conditions (\ref{interpolation-cond}), we give the following definitions and discussion. 

\begin{definition}[Basis function]
Given \(\y_1^{(k)},\y_2^{(k)},\y_3^{(k)},\bx_k\) and \(\dd_1^{(k)},\dd_2^{(k)}\in\Re^n\), let   
\begin{equation*}
\begin{aligned}
(\alpha_i^{(k)},\beta_i^{(k)})^{\top}&=\T_{\dd_1^{(k)},\dd_2^{(k)}}^{(k)}(\y_i^{(k)}),\ i=1,2,3, 
\end{aligned} 
\end{equation*} 
the basis matrix\footnote{We discuss the invertible case.} 
\begin{equation*}
\bm{W}= \setlength\arraycolsep{5pt}
\begin{bmatrix}
\beta_1^{(k)} & (\beta_1^{(k)})^2 & \alpha_1^{(k)}\beta_1^{(k)} \\
\beta_2^{(k)} & (\beta_2^{(k)})^2 & \alpha_2^{(k)}\beta_2^{(k)}  \\
\beta_3^{(k)} & (\beta_3^{(k)})^2 & \alpha_3^{(k)}\beta_3^{(k)} 
\end{bmatrix}, 
\end{equation*} 
\(
c_0^{(i)}=f(\bx_k)+a^{(k)}\alpha_i^{(k)}+b^{(k)}(\alpha_i^{(k)})^2  
\) for \(i=1,2,3\), 
and 
\begin{equation*}
\begin{bmatrix}
c_{1} \\
d_{1} \\
e_{1}
\end{bmatrix}=\W^{-1}\begin{bmatrix}
1-c_0^{(1)} \\
-c_0^{(1)} \\
-c_0^{(1)}
\end{bmatrix},\ \begin{bmatrix}
c_{2} \\
d_{2} \\
e_{2}
\end{bmatrix}=\W^{-1}\begin{bmatrix}
-c_0^{(2)} \\
1-c_0^{(2)} \\
-c_0^{(2)}
\end{bmatrix},\ 
\begin{bmatrix}
c_{3} \\
d_{3} \\
e_{3}
\end{bmatrix}=\W^{-1}\begin{bmatrix}
-c_0^{(3)} \\
-c_0^{(3)} \\
1-c_0^{(3)}
\end{bmatrix}. 
\end{equation*}
The basis functions are 
\begin{equation}
\label{basisfunction}
\ell_i(\alpha, \beta)=c_0^{(i)}+ c_i\beta+ d_i\beta^2+ e_i\alpha \beta, \ i=1,2,3.  
\end{equation}
\end{definition}

We now define the \(\Lambda\)-poisedness in our case. Notice that \(\Delta_k>0\).

\begin{definition}[\(\Lambda\)-poisedness for 2-dimensional case with 3 determined coefficients]
\label{newly-defined_poised} 
A set of 3 points \(\mathcal{Y}\subset \Re^2\) is said to be \(\Lambda\)-poised on a set \(\{(\alpha,\beta)^{\top}, \left\Vert (\alpha,\beta)\right\Vert_{\infty} \leq \Delta_k\}\) if \(\mathcal{Y}\) is linearly independent and the basis polynomials \(\left\{\ell_1, \ell_2,\ell_3\right\}\) associated with \(\mathcal{Y}\) in (\ref{basisfunction}) satisfy
\begin{equation}
\Lambda \geq \max_{1 \leq i \leq 3} \max_{\left\Vert (\alpha,\beta)\right\Vert_{\infty} \leq \Delta_k} \ \vert \ell_i(\alpha, \beta)\vert.
\end{equation}

\end{definition}

\begin{remark}\label{most_poised}
The most poised interpolation set is 1-poised. The region in Definition \ref{newly-defined_poised} is an \(\ell_{\infty}\)-ball for obtaining the analytic solution in the following without loss of generality of describing a region. 
\end{remark}

}

\begin{theorem}
\label{theoremofbasisfunctions}
At the step of constructing the quadratic interpolation model at each iteration, \texttt{2D-MoSub} has the following Lagrange basis function for computing. 

In the case where \(f(\bx_{k}) \le f(\y_1^{(k)})\), it holds that  
\begin{equation}
\label{ell1conclusion}
\left\{
\begin{aligned}
&\ell_1(\alpha, \beta)=c_0^{(1)}+\frac{1}{2 \Delta_k} \beta+\frac{1-2 c_0^{(1)}}{2 \Delta_k^2} \beta^2+\left(-\frac{1}{\Delta_k^2}\right) \alpha \beta,\\
&\ell_2(\alpha, \beta)=c_0^{(2)}+\left(-\frac{1}{2 \Delta_k}\right) \beta+\frac{1-2 c_0^{(2)}}{2 \Delta_k^2} \beta^2,\\
&\ell_3(\alpha, \beta)=c_0^{(3)}+\left(-\frac{c_0^{(3)}}{\Delta_k^2}\right) \beta^2+\frac{1}{\Delta_k^2} \alpha \beta, 
\end{aligned}
\right. 
\end{equation}
and in the case where \(f(\bx_{k}) > f(\y_1^{(k)})\), it holds that
\begin{equation}
\label{ell2conclusion}
\left\{
\begin{aligned}
&\ell_1(\alpha, \beta)=c_0^{(1)}+\frac{4-3 c_0^{(1)}}{2 \Delta_k} \beta+\frac{-2+c_0^{(1)}}{2 \Delta_k^2} \beta^2+\left(-\frac{1}{\Delta_k^2}\right) \alpha \beta,\\
&\ell_2(\alpha, \beta)=c_0^{(2)}+\left(-\frac{1+3 c_0^{(2)}}{2 \Delta_k}\right) \beta+\frac{1+c_0^{(2)}}{2 \Delta_k^2} \beta^2,\\
&\ell_3(\alpha, \beta)=c_0^{(3)}+\left(-\frac{3 c_0^{(3)}}{2 \Delta_k}\right) \beta+\frac{c_0^{(3)}}{2 \Delta_k^2} \beta^2+\frac{1}{\Delta_k^2} \alpha \beta,
\end{aligned}
\right.
\end{equation}
where \(c_0^{(1)}=f(\bx_k),\ c_0^{(2)}=f(\bx_k),\ c_0^{(3)}=f(\bx_k)+a^{(k)}\Delta_k+b^{(k)}\Delta_k^2\). 
\end{theorem}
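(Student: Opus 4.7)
The plan is to compute, in each of the two cases, the transformed coordinates of the interpolation points under $\T_{\dd_1^{(k)},\dd_2^{(k)}}^{(k)}$, assemble and invert the basis matrix $\W$, and then read off $(c_i,d_i,e_i)$ from the definition.

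First, I would identify the coordinates $(\alpha_i^{(k)},\beta_i^{(k)})=\T_{\dd_1^{(k)},\dd_2^{(k)}}^{(k)}(\y_i^{(k)})$. Since $\dd_1^{(k)}$ and $\dd_2^{(k)}$ are orthonormal, the transformation is a direct coordinate readout in that pair. From (\ref{y1k}), $(\alpha_1^{(k)},\beta_1^{(k)})=(0,\Delta_k)$. The two branches of (\ref{y2k}), taken together with the case split of the theorem, give $(\alpha_2^{(k)},\beta_2^{(k)})=(0,-\Delta_k)$ in Case~1 (the ``otherwise'' branch, when $f(\y_1^{(k)})>f(\x_k)$) and $(\alpha_2^{(k)},\beta_2^{(k)})=(0,2\Delta_k)$ in Case~2. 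In both cases the stated formulas reflect the representative configuration $\y_{\min,2}^{(k)}=\y_1^{(k)}$ shown in Fig.~\ref{initial_case_fig}, so by (\ref{y3k}) we obtain $(\alpha_3^{(k)},\beta_3^{(k)})=(\Delta_k,\Delta_k)$. Because $\alpha_1^{(k)}=\alpha_2^{(k)}=0$ and $\alpha_3^{(k)}=\Delta_k$, the claimed values $c_0^{(1)}=c_0^{(2)}=f(\x_k)$ and $c_0^{(3)}=f(\x_k)+a^{(k)}\Delta_k+b^{(k)}\Delta_k^2$ then follow from the definition of $c_0^{(i)}$.

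Second, in each case I would assemble and invert the $3\times 3$ basis matrix. In Case~1 one gets
\[
\W=\begin{bmatrix}\Delta_k & \Delta_k^2 & 0\\ -\Delta_k & \Delta_k^2 & 0\\ \Delta_k & \Delta_k^2 & \Delta_k^2\end{bmatrix},
\]
while Case~2 replaces the second row by $[\,2\Delta_k,\ 4\Delta_k^2,\ 0\,]$. Cofactor expansion along the third column, whose only nonzero entry is $\Delta_k^2$ in the last row, gives $\det(\W)=2\Delta_k^5$ in both cases. More usefully, the block structure decouples the inversion: the first two equations determine $(c_i,d_i)$ from a $2\times 2$ Vandermonde-type system in $\beta$, after which the third equation yields $e_i$ by back-substitution. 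Feeding the three right-hand sides $[\,1-c_0^{(1)},\,-c_0^{(1)},\,-c_0^{(1)}\,]^{\top}$, $[\,-c_0^{(2)},\,1-c_0^{(2)},\,-c_0^{(2)}\,]^{\top}$ and $[\,-c_0^{(3)},\,-c_0^{(3)},\,1-c_0^{(3)}\,]^{\top}$ through this inverse produces rational expressions in $\Delta_k$ and $c_0^{(i)}$; collecting the terms yields exactly (\ref{ell1conclusion}) in Case~1 and (\ref{ell2conclusion}) in Case~2, and a one-line substitution check then confirms $\ell_i(\alpha_j^{(k)},\beta_j^{(k)})=\delta_{ij}$.

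The main obstacle is bookkeeping rather than any analytic difficulty: pairing the branch of (\ref{y2k}) correctly with the implicit configuration $\y_{\min,2}^{(k)}=\y_1^{(k)}$, keeping track of the sign and magnitude change of $\beta_2^{(k)}$ between the two cases, and propagating the factors of $\Delta_k^{-1}$ and $\Delta_k^{-2}$ coming from $\det(\W)^{-1}$ without sign errors. Everything else reduces to a pair of explicit $3\times 3$ linear solves.
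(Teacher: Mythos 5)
Your proposal is correct and follows essentially the same route as the paper's proof: identify the transformed coordinates of $\y_1^{(k)},\y_2^{(k)},\y_3^{(k)}$ in each of the two cases, assemble the basis matrices (your $\W$ in Case~1 and its Case~2 variant are exactly the paper's $\W_1$ and $\W_2$), and solve the three $3\times 3$ systems for $(c_i,d_i,e_i)$. You in fact supply more detail than the paper does --- in particular making explicit the implicit configuration $\y_{\min,2}^{(k)}=\y_1^{(k)}$ behind the third row and the decoupled back-substitution for $e_i$ --- but the argument is the same.
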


\begin{proof} 
In the case where \(f(\bx_k)\le f(\y_1^{(k)})\), it holds that 
\begin{equation}
\W_1=\begin{bmatrix}
\Delta_k & \Delta_k^2 & 0 \\
-\Delta_k & \Delta_k^2 & 0 \\
\Delta_k & \Delta_k^2 & \Delta_k^2
\end{bmatrix},
\end{equation}
and 
\begin{equation*}
\begin{bmatrix}
c_{1} \\
d_{1} \\
e_{1}
\end{bmatrix}=\W_1^{-1}\begin{bmatrix}
1-c_0^{(1)} \\
-c_0^{(1)} \\
-c_0^{(1)}
\end{bmatrix},\ \begin{bmatrix}
c_{2} \\
d_{2} \\
e_{2}
\end{bmatrix}=\W_1^{-1}\begin{bmatrix}
-c_0^{(2)} \\
1-c_0^{(2)} \\
-c_0^{(2)}
\end{bmatrix},\ \begin{bmatrix}
c_{3} \\
d_{3} \\
e_{3}
\end{bmatrix}=\W_1^{-1}\begin{bmatrix}
-c_0^{(3)} \\
-c_0^{(3)} \\
1-c_0^{(3)}
\end{bmatrix}. 
\end{equation*} 
In the case where \(f(\bx_k) > f(\y_1^{(k)})\), it holds that  
\begin{equation}
\W_2=\begin{bmatrix}
\Delta_k & \Delta_k^2 & 0 \\
2 \Delta_k & 4 \Delta_k^2 & 0 \\
\Delta_k & \Delta_k^2 & \Delta_k^2
\end{bmatrix}, 
\end{equation}
and
\begin{equation*}
\begin{bmatrix}
c_{1} \\
d_{1} \\
e_{1}
\end{bmatrix}=\W_2^{-1}\begin{bmatrix}
1-c_0^{(1)} \\
-c_0^{(1)} \\
-c_0^{(1)}
\end{bmatrix},\ \begin{bmatrix}
c_{2} \\
d_{2} \\
e_{2}
\end{bmatrix}=\W_2^{-1}\begin{bmatrix}
-c_0^{(2)} \\
1-c_0^{(2)} \\
-c_0^{(2)}
\end{bmatrix},\ \begin{bmatrix}
c_{3} \\
d_{3} \\
e_{3}
\end{bmatrix}=\W_2^{-1}\begin{bmatrix}
-c_0^{(3)} \\
-c_0^{(3)} \\
1-c_0^{(3)}
\end{bmatrix}. 
\end{equation*}

Therefore, (\ref{ell1conclusion}) and (\ref{ell2conclusion}) hold.  
\end{proof}

Fig. \ref{fourDifferent casesfory1y2y3} shows the different cases for \(\y_1^{(k)},\y_2^{(k)},\y_3^{(k)}\).

\begin{figure}[htbp]
\centering 
\fbox{
\begin{tikzpicture}[scale=0.5]

\coordinate (y1) at (4,2);
\coordinate (y2) at (4,-2);
 \coordinate (y2fake) at (4,2.7);
\coordinate (y3) at (6,2);

\fill (y1) circle [radius=2.5pt] node[below left] {$\bm{y}_1^{(k)}$};
\fill (y2) circle [radius=2.5pt] node[above left] {$\bm{y}_2^{(k)}$};
\fill (y2fake) circle [radius=0pt];
\fill (y3) circle [radius=2.5pt] node[below right] {$\bm{y}_3^{(k)}$};


\draw[dashed] (y2) -- (y1) node[midway, right] {\(2\Delta_k\)} -- (y3) node[midway, below] {\(\Delta_k\)};

\end{tikzpicture}
}
\ 
\fbox{
\begin{tikzpicture}[scale=0.5]

\coordinate (y1) at (4,2);
\coordinate (y2) at (4,-2);
 \coordinate (y2fake) at (4,2.7);
\coordinate (y3) at (6,-2);

\fill (y1) circle [radius=2.5pt] node[below left] {$\bm{y}_1^{(k)}$};
\fill (y2) circle [radius=2.5pt] node[above left] {$\bm{y}_2^{(k)}$};
\fill (y2fake) circle [radius=0pt];
\fill (y3) circle [radius=2.5pt] node[above right] {$\bm{y}_3^{(k)}$};


\draw[dashed] (y1) -- (y2) node[midway, right] {\(2\Delta_k\)} -- (y3) node[midway, above] {\(\Delta_k\)};

\end{tikzpicture}
}\  \fbox{
\begin{tikzpicture}[scale=0.41]


\coordinate (y1) at (4,2);
\coordinate (y2fake) at (4,1);
\coordinate (y2) at (4,4);
\coordinate (y3) at (6,2);

\fill (y1) circle [radius=2.5pt] node[below left] {$\bm{y}_1^{(k)}$};
\fill (y2) circle [radius=2.5pt] node[above left] {$\bm{y}_2^{(k)}$};
\fill (y2fake) circle [radius=0pt];
\fill (y3) circle [radius=2.5pt] node[below right] {$\bm{y}_3^{(k)}$};


\draw[dashed] (y2) -- (y1) node[midway, right] {\(\Delta_k\)} -- (y3) node[midway, below] {\(\Delta_k\)};


\end{tikzpicture}
}\   \fbox{
\begin{tikzpicture}[scale=0.41]


\coordinate (y1) at (4,2);
\coordinate (y2fake) at (4,1);
\coordinate (y2) at (4,4);
\coordinate (y3) at (6,4);

\fill (y1) circle [radius=2.5pt] node[below left] {$\bm{y}_1^{(k)}$};
\fill (y2) circle [radius=2.5pt] node[above left] {$\bm{y}_2^{(k)}$};
\fill (y2fake) circle [radius=0pt];
\fill (y3) circle [radius=2.5pt] node[below right] {$\bm{y}_3^{(k)}$};


\draw[dashed] (y1) -- (y2) node[midway, right] {\(\Delta_k\)} -- (y3) node[midway, above] {\(\Delta_k\)};


\end{tikzpicture}
}
\caption{Different cases for \(\y_1^{(k)},\y_2^{(k)},\y_3^{(k)}\)\label{fourDifferent casesfory1y2y3}}
\end{figure}

\begin{proposition}
\begin{sloppypar}
In the case where \(f(\bx_k)\le f(\y_1^{(k)})\), it holds that the interpolation set \(\{\y_1^{(k)},\y_2^{(k)},\y_3^{(k)}\}\) is \(\Lambda_1\)-poised, where
\( 
\Lambda_1=2.
\) 
In the case where \(f(\bx_k)> f(\y_1^{(k)})\), it holds that the interpolation set \(\{\y_1^{(k)},\y_2^{(k)},\y_3^{(k)}\}\) is \(\Lambda_2\)-poised, where
\(
\Lambda_2\le \max\{4, 1+3\Delta_k (\vert a^{(k)} \vert+ \vert b^{(k)}\vert\Delta_k) \}.
\) 
\end{sloppypar}
\end{proposition}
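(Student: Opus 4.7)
My plan is to apply the explicit Lagrange basis formulas from Theorem~\ref{theoremofbasisfunctions} and bound $\max_{i\in\{1,2,3\}}\max_{\|(\alpha,\beta)\|_\infty\le\Delta_k}|\ell_i(\alpha,\beta)|$ directly. I would first perform the rescaling $u=\alpha/\Delta_k$ and $v=\beta/\Delta_k$, which turns the $L^\infty$-ball into the square $[-1,1]^2$ and removes all explicit $\Delta_k$ factors from the basis polynomials. Without loss of generality, I would translate $f$ so that $f(\x_k)=0$; this does not alter the geometry of the interpolation set and simplifies $c_0^{(1)}=c_0^{(2)}=0$ and $c_0^{(3)}=a^{(k)}\Delta_k+b^{(k)}\Delta_k^2$, which is precisely what will bring the $a^{(k)},b^{(k)}$ contribution into the Case~2 bound.

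For Case~1, formula \eqref{ell1conclusion} rewrites as $\ell_1(u,v)=\tfrac{v}{2}+\tfrac{v^2}{2}-uv$, $\ell_2(u,v)=-\tfrac{v}{2}+\tfrac{v^2}{2}$, and $\ell_3(u,v)=c_0^{(3)}(1-v^2)+uv$. A termwise triangle inequality on $[-1,1]^2$ gives $|\ell_1|\le\tfrac{1}{2}+\tfrac{1}{2}+1=2$, attained at $(u,v)=(-1,1)$, and $|\ell_2|\le 1$. The factor $1-v^2$ vanishes at $|v|=1$ (where all three nodes sit) and is maximized at $v=0$, so combined with $|uv|\le 1$ this yields the desired control on $\ell_3$ and hence $\Lambda_1=2$.

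For Case~2, formula \eqref{ell2conclusion} rewrites as $\ell_1(u,v)=2v-v^2-uv$, $\ell_2(u,v)=-\tfrac{v}{2}+\tfrac{v^2}{2}$, and $\ell_3(u,v)=c_0^{(3)}p(v)+uv$ with $p(v)=1-\tfrac{3v}{2}+\tfrac{v^2}{2}$. Evaluating $\ell_1$ at the corner $(u,v)=(-1,-1)$ and matching with the termwise bound $|\ell_1|\le 2+1+1=4$ produces $\max|\ell_1|=4$; $|\ell_2|\le 1$ is identical to Case~1. For $\ell_3$, elementary calculus on the univariate quadratic $p$ shows $\max_{|v|\le 1}|p(v)|=p(-1)=3$, so $|\ell_3|\le 3|c_0^{(3)}|+1\le 1+3\Delta_k(|a^{(k)}|+|b^{(k)}|\Delta_k)$. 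Taking the maximum over $i$ then delivers $\Lambda_2\le\max\{4,\,1+3\Delta_k(|a^{(k)}|+|b^{(k)}|\Delta_k)\}$.

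The main obstacle will be the Case~2 analysis of $\ell_3$. Because the node $\y_2^{(k)}$ sits at the rescaled coordinate $v=2$, which lies outside the trust-region box, the multiplier $p(v)$ of $c_0^{(3)}$ is engineered to vanish at $v\in\{1,2\}$ but does \emph{not} vanish on $[-1,1]$; in fact it reaches $|p|=3$ at $v=-1$. Pinning down this sharp constant of $3$ (rather than a crude termwise bound like $1+\tfrac{3}{2}+\tfrac{1}{2}=3$, which here happens to be tight but need not be in general) is what produces both the factor $3$ in the bound and the presence of the $a^{(k)},b^{(k)}$ correction in Case~2, while the analogous multiplier $1-v^2$ in Case~1 enjoys a much smaller maximum and therefore does not disturb the constant bound $\Lambda_1=2$. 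All remaining estimates reduce to routine triangle inequalities on the unit square.
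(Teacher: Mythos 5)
Your overall route is the same as the paper's: the paper's proof is a one-sentence assertion that the bounds follow from the analytic maximization of the basis functions of Theorem~\ref{theoremofbasisfunctions} over the box $\Vert(\alpha,\beta)\Vert_\infty\le\Delta_k$, and you carry out that maximization explicitly after rescaling to $[-1,1]^2$. Your Case~2 computation is correct and reproduces the stated bound exactly: $\max|\ell_1|=4$ at the corner $(-1,-1)$, $\max|\ell_2|=1$, and $\max|\ell_3|\le 3|c_0^{(3)}|+1\le 1+3\Delta_k(|a^{(k)}|+|b^{(k)}|\Delta_k)$, since $p(v)=1-\tfrac{3v}{2}+\tfrac{v^2}{2}$ is decreasing on $[-1,1]$ with $p(-1)=3$ and $p(1)=0$. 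One caveat on your setup: putting $f(\x_k)=0$ is not literally ``without loss of generality'' under Definition~\ref{newly-defined_poised}, because $c_0^{(1)}=c_0^{(2)}=f(\x_k)$ enter the basis functions as genuine constant terms (so $\ell_1(0,0)=f(\x_k)$ can be made arbitrarily large by translating $f$). It is, however, evidently the normalization the proposition intends, since its bounds contain no $f(\x_k)$ term; you should present it as the reading of the statement rather than as a harmless translation.

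The genuine gap is in your Case~1 treatment of $\ell_3$. With $f(\x_k)=0$ you have $\ell_3(u,v)=c_0^{(3)}(1-v^2)+uv$ with $c_0^{(3)}=a^{(k)}\Delta_k+b^{(k)}\Delta_k^2$, and the sentence ``this yields the desired control on $\ell_3$'' does not establish $|\ell_3|\le 2$. Indeed $\ell_3(0,0)=c_0^{(3)}$, so $\max_{[-1,1]^2}|\ell_3|\ge|c_0^{(3)}|$, which exceeds $2$ whenever $|c_0^{(3)}|>2$; the exact maximum is $|c_0^{(3)}|+\tfrac{1}{4|c_0^{(3)}|}$ once $|c_0^{(3)}|>\tfrac12$. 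This is precisely the mechanism that forces the $(a^{(k)},b^{(k)})$-dependent term into your Case~2 bound, and it does not disappear in Case~1: the node $\y_3^{(k)}$ sits off the line $\alpha=0$ in both cases, so the multiplier of $c_0^{(3)}$ cannot vanish identically on the box. To close the argument you must either add a hypothesis such as $|c_0^{(3)}|\le 1$, or weaken the Case~1 conclusion to $\Lambda_1\le\max\{2,\,1+\Delta_k(|a^{(k)}|+|b^{(k)}|\Delta_k)\}$ in parallel with Case~2 (using $|\ell_3|\le|c_0^{(3)}|\cdot\max_{|v|\le1}|1-v^2|+\max|uv|=|c_0^{(3)}|+1$). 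As written, your proof of the first half of the proposition does not go through; in fairness, the paper's own one-line proof does not address this point either.
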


\begin{proof}
The conclusion holds straightfoward according to the analytic solutions of the 2-dimensional problem
\begin{equation*}
\max _{1 \leq i \leq 3} \max_{\left\Vert (\alpha,\beta)\right\Vert_{\infty} \leq \Delta_k} \ \vert \ell_i(\alpha, \beta)\vert 
\end{equation*}
for \(\ell_1,\ell_2,\ell_3\) in the two cases in Theorem \ref{theoremofbasisfunctions} and this proposition. 
\end{proof}

Considering Remark \ref{most_poised}, the interpolation sets above used by \texttt{2D-MoSub} are sufficiently poised on a 2-dimensional subspace.

\section{Some properties of \texttt{2D-MoSub}}

\label{section:Some properties of 2D-MoSub}

The main idea of our new proposed subspace derivative-free optimization method \texttt{2D-MoSub} is to iteratively obtain the iteration point by minimizing a quadratic model function within the trust region in a 2-dimensional subspace. The quadratic model in the current 2-dimensional subspace and the one defined in one dimension of the 2-dimensional subspace inherit good properties of the previous subspace, model, and iteration point. 

To construct a determined quadratic interpolation model function at each step, \texttt{2D-MoSub} adds 3 new interpolation points to take up the other 3 coefficients of the 2-dimensional quadratic model after 3 coefficients have been already taken up by the previous model.

The following discusses some properties of our method. Notice that (theoretically) for \(\bx_{k+1}=\bx_k+\alpha^{(k)}\dd_1^{(k)}+\beta^{(k)}\dd_2^{(k)}\), \(\alpha^{(k)}\) and \(\beta^{(k)}\) satisfy that 
\[ 
(\alpha^{(k)},\beta^{(k)})^{\top} \in  \{\arg \min_{\alpha,\beta} \ Q_k(\alpha,\beta), \text { subject to } \alpha^2+\beta^2 \leq \Delta_k^2\}. 
\]

We have the following proposition. 

\begin{proposition} 
It holds that 
\begin{equation}
\label{equ123}
\left\{
\begin{aligned}
&\min_{\alpha^2+\beta^2 \leq \Delta_k^2} \ Q^{+}_k(\T_{\dd_1^{(k+1)},\dd_*^{(k+1)}}^{(k+1)}(\bx_k+\alpha \dd_1^{(k)}+\beta \dd_2^{(k)})) \leq f(\bx_{k+1}), \\
&\min_{-\Delta_k \le \alpha \leq \Delta_k} \ Q^{\rm sub}_{k+1}(\hat{\T}_{\dd_1^{(k+1)}}^{(k+1)}(\bx_k+\alpha \dd_1^{(k+1)}))\leq f(\bx_{k+1}),\\
&\min_{\alpha^2+\beta^2 \leq \Delta^2_{k+1}} \ Q_{k+1}(\T_{\dd_1^{(k+1)},\dd_2^{(k+1)}}^{(k+1)}(\bx_{k+1}+\alpha \dd_1^{(k+1)}+\beta \dd_2^{(k+1)}))   \leq f(\bx_{k+1}). 
\end{aligned}
\right. 
\end{equation}
\end{proposition}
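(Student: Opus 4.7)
The plan is to read each of the three inequalities as a statement that the corresponding model, on its current trust region, takes a value no larger than $f(\x_{k+1})$, and to prove each by exhibiting one feasible point (namely the one corresponding to $\x_{k+1}$ itself) where the model value is exactly $f(\x_{k+1})$ by an interpolation condition. The key geometric observation to set up first is that after a successful step $\dd_1^{(k+1)} = (\x_{k+1}-\x_k)/\|\x_{k+1}-\x_k\|_2$ lies in $\operatorname{span}\{\dd_1^{(k)},\dd_2^{(k)}\}$, because $\x_{k+1}-\x_k$ was produced by a trust-region subproblem solved in that very 2D subspace; since $\dd_*^{(k+1)}\in\operatorname{span}\{\dd_1^{(k)},\dd_2^{(k)}\}$ is taken orthogonal to $\dd_1^{(k+1)}$, the two affine planes $\x_k+\operatorname{span}\{\dd_1^{(k)},\dd_2^{(k)}\}$ and $\x_{k+1}+\operatorname{span}\{\dd_1^{(k+1)},\dd_*^{(k+1)}\}$ coincide. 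Hence applying $\T^{(k+1)}_{\dd_1^{(k+1)},\dd_*^{(k+1)}}$ to any point of the form $\x_k+\alpha\dd_1^{(k)}+\beta\dd_2^{(k)}$ is well-defined.

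For the first inequality, I would write $\x_{k+1}=\x_k+\alpha^{\star}\dd_1^{(k)}+\beta^{\star}\dd_2^{(k)}$ for suitable $\alpha^{\star},\beta^{\star}$; since $\x_{k+1}$ is a trust-region trial point inside the ball of radius $\Delta_k$ around $\x_k$, the pair $(\alpha^{\star},\beta^{\star})$ satisfies $(\alpha^{\star})^2+(\beta^{\star})^2\le \Delta_k^2$ and is thus feasible for the outer minimization. By (\ref{Q+condconclude}), $\x_{k+1}$ is (in the generic case) one of the six interpolation points $\bm{Y}$ used to build $Q_k^+$, so $Q_k^+(\T^{(k+1)}_{\dd_1^{(k+1)},\dd_*^{(k+1)}}(\x_{k+1}))=f(\x_{k+1})$, giving the desired bound. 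For the second inequality, use (\ref{Qk+1subconditon}), i.e.\ $Q_{k+1}^{\text{sub}}(\alpha)=Q_k^+(\alpha,0)$; take $\alpha=\|\x_{k+1}-\x_k\|_2\in[0,\Delta_k]$, so $\x_k+\alpha\dd_1^{(k+1)}=\x_{k+1}$ and $\hat{\T}^{(k+1)}_{\dd_1^{(k+1)}}(\x_{k+1})=0$; then $Q_{k+1}^{\text{sub}}(0)=Q_k^+(0,0)=f(\x_{k+1})$ by the interpolation condition at $\x_{k+1}$ just established. For the third inequality, note from the very form (\ref{model}) of $Q_{k+1}$ that its constant term is $f(\x_{k+1})$; taking $(\alpha,\beta)=(0,0)$, which is trivially feasible regardless of $\Delta_{k+1}>0$, yields $Q_{k+1}(0,0)=f(\x_{k+1})$ and closes the argument.

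The main obstacle is bookkeeping rather than analysis: one must verify that the coordinate chart $\T^{(k+1)}$ is really well-defined on the points being plugged in (which is where the plane-coincidence observation is essential), and that in the branch where \texttt{2D-MoSub} falls back to an alternative poised subset $\bm{Y}\subset \bm{Y}_k^+$ to build $Q_k^+$, the point $\x_{k+1}$ is still retained among the interpolation nodes; if it happens to be dropped, the argument for (1) and (2) has to be replaced by the weaker statement that the model only approximates $f$ at $\x_{k+1}$. Under the standard assumption that $\x_{k+1}\in\bm{Y}$ — which matches the paper's design of always keeping the newest iterate as an interpolation point — the three inequalities reduce to the three one-line interpolation identities sketched above.
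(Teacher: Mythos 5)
Your proof is correct and is essentially the ``direct from the definitions'' argument the paper leaves implicit: each inequality follows by evaluating the relevant model at the feasible point corresponding to $\x_{k+1}$ (which is $(0,0)$ in the $(k+1)$-st coordinate chart, feasible because the paper's preceding remark takes $\x_{k+1}$ to be the trust-region minimizer within radius $\Delta_k$) and using the interpolation identity there. Your residual worry about $\x_{k+1}$ possibly being dropped from the poised subset $\bm{Y}$ is moot: $Q_k^{+}$ has constant term $f(\x_{k+1})$ by construction, so $Q_k^{+}(0,0)=f(\x_{k+1})$ no matter which six points are retained.
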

\begin{proof}
The proof of the proposition is direct based on the definitions. 
\end{proof}

In conclusion, the way of updating our new model has two advantages. One is that the model \(Q_{k+1}\) sufficiently considers the property of \(Q_k\) along the 1-dimensional subspace \(\bx_{k+1}+\text{span}\left\{\dd_1^{(k+1)}\right\}\), since \(\bx_{k+1}\) itself is already a successful step given by \(Q_k\). The other advantage or necessary property is that minimizing \(Q_{k+1}\) over a trust region can give an iteration point with a non-increasing model function value according to (\ref{equ123}).

{In addition, the quadratic model $Q_k$ obtained by \texttt{2D-MoSub} is exactly a solution of the following subproblem, and it denotes that it is the same as \(Q_{k-1}^{+}\) along the direction \(\dd_1^{(k)}\), which is an approximate gradient descent direction numerically.  
\begin{subproblem}[Least \(L^2\)-norm along a direction updating]
\begin{equation}
\label{subproblemofqk}
\begin{aligned}
\min_{Q} & \quad \int_{-\infty}^{\infty}\left( Q(\alpha, 0)-Q_{k-1}^{+}(\alpha, 0)\right)^2 d\alpha \\
\text {\rm subject to} &\quad Q(\T_{\dd_1^{(k)}, \dd_2^{(k)}}^{(k)}(\z))= f(\z), \ \forall\ \z \in \{\bx_k, \y_1^{(k)}, \y_2^{(k)}, \y_3^{(k)}\}. 
\end{aligned}
\end{equation}
\end{subproblem} 


We will show the convexity of subproblem (\ref{subproblemofqk}). 

\begin{theorem}

The subproblem (\ref{subproblemofqk}) is strictly convex.

\end{theorem}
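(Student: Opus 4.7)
The plan is to view the subproblem as a constrained quadratic program in the 6-dimensional space of coefficients of bivariate quadratics on \(\Re^2\). First I would note that each interpolation condition \(Q(\T_{\dd_1^{(k)}, \dd_2^{(k)}}^{(k)}(\z)) = f(\z)\) is a linear equality in the coefficients of \(Q\), so the feasible set is an affine (hence convex) subspace. Next, writing the difference \(Q(\alpha, 0) - Q_{k-1}^{+}(\alpha, 0) = \delta c_0 + \delta a \,\alpha + \delta b \,\alpha^2\) with \((\delta c_0, \delta a, \delta b)\) affine in the coefficients of \(Q\), the objective is the squared \(L^2\)-norm of this degree-\(\le 2\) polynomial in \(\alpha\). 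Interpreting the integral via its Gram matrix of monomial moments, this is a positive semidefinite quadratic form in \((\delta c_0, \delta a, \delta b)\) whose Gram matrix is strictly positive definite by the linear independence of \(\{1,\alpha,\alpha^2\}\). As a functional of all six coefficients of \(Q\), however, the Hessian has rank only 3, vanishing on variations in the \(\beta\)-, \(\beta^2\)-, and \(\alpha\beta\)-coefficients \(c\), \(d\), \(e\).

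The main obstacle is therefore strict convexity: the feasible subspace must avoid the null directions of this Hessian, i.e., the map \(Q \mapsto Q(\cdot,0)\) restricted to feasible \(Q\) must be injective. I would unpack the four interpolation conditions in the transformed coordinates. The condition at \(\x_k\) sits at \((0,0)\) and fixes \(c_0 = f(\x_k)\). The conditions at \(\y_1^{(k)}\) and \(\y_2^{(k)}\) both have \(\alpha\)-coordinate zero but distinct \(\beta\)-coordinates drawn from \(\{\Delta_k, 2\Delta_k, -\Delta_k\}\) by construction, so they yield a nonsingular \(2\times 2\) Vandermonde-type system uniquely determining \(c\) and \(d\). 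The condition at \(\y_3^{(k)}\), whose \(\alpha\)-coordinate equals \(\Delta_k \neq 0\), becomes a single linear equation expressing \(e\) as an affine function of \((a,b)\). Hence the feasible set is a two-dimensional graph parameterized by \((a,b)\in\Re^2\), and the restriction map to \(Q(\cdot,0)\) is injective on it.

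Finally, composing the strictly convex quadratic \(p \mapsto \|p\|^2\) on \(\mathrm{span}\{1,\alpha,\alpha^2\}\) with this injective affine parameterization yields a strictly convex objective on the feasible set, so (\ref{subproblemofqk}) is a strictly convex problem. The only delicate check is the nonsingularity of the \(2\times 2\) system for \((c,d)\), which follows directly from the distinctness of the two relevant \(\beta\)-coordinates prescribed in Step 1 of \texttt{2D-MoSub}; the rest reduces to bookkeeping on linear equality constraints and the positive definiteness of the monomial moment matrix.
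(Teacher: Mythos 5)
Your proof is correct in structure and takes a genuinely different route from the paper's. The paper argues directly on the space of all bivariate quadratics $Q$: it expands the convex-combination inequality and reduces the difference between the two sides to $(c-c^{2})\int_{-\infty}^{\infty}\left(Q_a(\alpha,0)-Q_b(\alpha,0)\right)^{2}d\alpha$ (the displayed ``$<0$'' there is a sign slip; the quantity is nonnegative). Crucially, that quantity vanishes whenever $Q_a$ and $Q_b$ differ only in the $\beta$-, $\beta^{2}$-, and $\alpha\beta$-coefficients, so the paper's computation by itself only yields convexity of the objective, with strictness failing along exactly the three null directions you identify. Your proposal supplies the missing piece: you observe that the Hessian of the objective has rank $3$ on the six-dimensional coefficient space, and then you use the four interpolation constraints --- $\x_k$ at $(0,0)$ fixing the constant term, $\y_1^{(k)},\y_2^{(k)}$ at distinct nonzero $\beta$-coordinates giving a nonsingular Vandermonde-type system for $c$ and $d$, and $\y_3^{(k)}$ with nonzero $\alpha\beta$-product determining $e$ affinely from $(a,b)$ --- to show the feasible affine set is a graph over $(a,b)$ on which $Q\mapsto Q(\cdot,0)$ is injective. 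Composing with the (formally) positive definite moment form then gives strict convexity \emph{of the subproblem}, which is the statement actually claimed and the one needed for the uniqueness assertion the paper draws immediately afterwards. The one caveat, which you inherit from the paper rather than introduce, is that $\int_{-\infty}^{\infty}$ of the square of a nonzero polynomial diverges, so both arguments must be read formally (e.g., with the integral taken over a bounded interval); with that reading, your argument is the more complete of the two.
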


\begin{proof}
What we need to show is that the objective function is strictly convex as a function of \(Q\), i.e., for \(0<c<1\) and 2-dimensional quadratic functions \(Q_a\) and \(Q_b\), it holds that 
\begin{equation}
\label{wantandneedtoprove}
\begin{aligned}
&\int_{-\infty}^{\infty}\left( c Q_a(\alpha, 0)+(1-c) Q_b(\alpha, 0)-Q_{k-1}^{+}(\alpha, 0)\right)^2 d\alpha\\
<&c\int_{-\infty}^{\infty}\left(Q_a(\alpha, 0)-Q_{k-1}^{+}(\alpha, 0)\right)^2d\alpha+(1-c)\int_{-\infty}^{\infty}\left(Q_b(\alpha, 0)-Q_{k-1}^{+}(\alpha, 0)\right)^2d\alpha.
\end{aligned}
\end{equation}

It holds that the difference between the right-hand side of (\ref{wantandneedtoprove}) and the left-hand side of (\ref{wantandneedtoprove}) is 
\begin{equation*}
\begin{aligned}
&-2 c(1-c) \int_{-\infty}^{\infty}Q_a(\alpha, 0) Q_b(\alpha, 0) d\alpha+(c-c^2) \int_{-\infty}^{\infty}(Q_a(\alpha, 0))^2d\alpha \\
&+(1-c-(1-c)^2) \int_{-\infty}^{\infty}(Q_b(\alpha, 0))^2d\alpha \\
= & (c-c^2) \int_{-\infty}^{\infty}(Q_a(\alpha, 0)-Q_b(\alpha, 0))^2d\alpha<0,
\end{aligned}
\end{equation*} 
since  \(0<c<1\). Therefore, we obtain the strictly convex of the objective function of the subproblem. 
\end{proof}

The theorem above shows that the model \(Q_{k}\) obtained by \texttt{2D-MoSub} is exactly the unique solution of the subproblem (\ref{subproblemofqk}). The above shows the method \texttt{2D-MoSub}'s advantage when \(\dd_1^{(k)}\) is an approximate gradient descent direction. 

\begin{theorem}
If \(Q_k\) is the solution of subproblem (\ref{subproblemofqk}), then for quadratic function \(f\), it holds that 
\begin{equation}
\label{proj-1} 
\begin{aligned}
\int_{-\infty}^{\infty}\left(Q_k(\alpha, 0)-\tilde{f}(\alpha, 0)\right)^2d\alpha=&\int_{-\infty}^{\infty}\left(Q_{k-1}^{+}(\alpha, 0)-\tilde{f}(\alpha, 0)\right)^2d\alpha\\
&-\int_{-\infty}^{\infty}\left(Q_k(\alpha, 0)-Q_{k-1}^{+}(\alpha, 0)\right)^2d\alpha,
\end{aligned}
\end{equation}
where \(\tilde{f}=f\circ \T_{\dd_1^{(k)},\dd_2^{(k)}}^{(k)}\). 
\end{theorem}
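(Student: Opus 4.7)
The plan is to recognize subproblem (\ref{subproblemofqk}) as an $L^2$-type projection of $Q_{k-1}^+$ onto an affine feasible set (under the formal inner product $\langle P, R\rangle := \int_{-\infty}^{\infty} P(\alpha, 0) R(\alpha, 0)\, d\alpha$) and then to apply the Pythagorean identity with $\tilde{f}$ serving as an auxiliary feasible point. The critical enabling observation is that when $f$ is quadratic, its restriction $\tilde{f}$ to the $2$-dimensional subspace $\x_k + \mathrm{span}\{\dd_1^{(k)}, \dd_2^{(k)}\}$ is itself a quadratic in the two subspace coordinates; and because every interpolation point in $\{\x_k, \y_1^{(k)}, \y_2^{(k)}, \y_3^{(k)}\}$ lies in this subspace by Step 1, $\tilde{f}$ satisfies $\tilde{f}(\T_{\dd_1^{(k)},\dd_2^{(k)}}^{(k)}(\z)) = f(\z)$ at each such $\z$. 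Hence $\tilde{f}$ is feasible for (\ref{subproblemofqk}), which makes the projection picture meaningful.

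Next I would derive the key orthogonality relation from the first-order optimality conditions. Since the objective is a convex quadratic in the coefficients of $Q$ and the feasible set is affine, the minimizer $Q_k$ is characterized by vanishing directional derivatives along every feasible direction $\Delta Q$ (that is, any quadratic with $\Delta Q(\T_{\dd_1^{(k)},\dd_2^{(k)}}^{(k)}(\z)) = 0$ at each interpolation point), which yields
\begin{equation*}
\int_{-\infty}^{\infty} (Q_k(\alpha, 0) - Q_{k-1}^+(\alpha, 0))\, \Delta Q(\alpha, 0)\, d\alpha = 0.
\end{equation*}
Specializing to $\Delta Q = \tilde{f} - Q_k$, which is a valid feasible direction because $\tilde{f}$ and $Q_k$ satisfy identical interpolation conditions, produces the orthogonality
\begin{equation*}
\int_{-\infty}^{\infty} (Q_k(\alpha, 0) - Q_{k-1}^+(\alpha, 0))(\tilde{f}(\alpha, 0) - Q_k(\alpha, 0))\, d\alpha = 0.
\end{equation*}
The Pythagorean identity then follows by inserting the splitting $Q_{k-1}^+ - \tilde{f} = (Q_{k-1}^+ - Q_k) + (Q_k - \tilde{f})$ into $\int (Q_{k-1}^+ - \tilde{f})^2(\alpha, 0)\, d\alpha$, expanding the square, and using the orthogonality above to annihilate the cross term; rearrangement yields exactly (\ref{proj-1}).

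The main obstacle will be rigorously handling the improper integrals, since the square of a nonzero univariate quadratic polynomial is not Lebesgue-integrable over $\Re$. In the actual construction of \texttt{2D-MoSub}, $Q_k$ inherits the coefficients $a^{(k)}, b^{(k)}$ from $Q_k^{\text{sub}}$, which by (\ref{Qk+1subconditon}) equals $Q_{k-1}^+(\cdot, 0)$; thus $Q_k(\alpha, 0) \equiv Q_{k-1}^+(\alpha, 0)$ and the subtracted integral $\int (Q_k - Q_{k-1}^+)^2(\alpha, 0)\, d\alpha$ vanishes identically, so (\ref{proj-1}) effectively reduces to a tautology. Alternatively, interpreting the integrals over a bounded region (for instance the trust-region interval $[-\Delta_k, \Delta_k]$) turns the underlying space into an honest finite-dimensional inner-product space and the projection argument sketched above goes through verbatim.
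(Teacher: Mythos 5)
Your proposal is correct and follows essentially the same route as the paper: the paper's proof sets $Q_t = Q_k + t\left(Q_k - \tilde f\right)$, uses the fact that $\varphi(t)=\int_{-\infty}^{\infty}\left(Q_t(\alpha,0)-Q_{k-1}^{+}(\alpha,0)\right)^2 d\alpha$ is minimized at $t=0$ to extract exactly your orthogonality relation, and then evaluates $\varphi(-1)$ to obtain (\ref{proj-1}) --- a one-parameter instance of your projection/Pythagorean argument, resting on the same feasibility of $\tilde f$. Your caveat about the divergence of the improper integrals is a fair observation that the paper's proof does not address, but it does not alter the substance of the argument.
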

 
\begin{proof}
Let \(Q_t=Q_k+t\left(Q_k-\tilde{f}\right)\), where \(t \in {\Re}\). Then \(Q_t\) is a quadratic function and it satisfies the interpolation conditions of subproblem (\ref{subproblemofqk}). Based on the optimality of \(Q_k\), we can know that the quadratic function 
\begin{equation*}
\varphi(t):=\int_{-\infty}^{\infty}\left(Q_t(\alpha, 0)-Q_{k-1}^{+}(\alpha, 0)\right)^2 d\alpha 
\end{equation*} 
achieves its minimum function value when \(t=0\). We expand \(\varphi(t)\), and then obtain that 
\begin{equation*}
\begin{aligned}
\varphi(t)= & t^2\int_{-\infty}^{\infty}\left(Q_k(\alpha, 0)-\tilde{f}(\alpha, 0)\right)^2d\alpha\\
&+2 t\int_{-\infty}^{\infty}\left(Q_k(\alpha, 0)-\tilde{f}(\alpha, 0)\right)\left(Q_k(\alpha, 0)-Q_{k-1}^{+}(\alpha, 0)\right)d\alpha\\
&+\int_{-\infty}^{\infty}\left(Q_k(\alpha, 0)-Q_{k-1}^{+}(\alpha, 0)\right)^2d\alpha, 
\end{aligned}
\end{equation*} 
and hence
\begin{equation*}
\int_{-\infty}^{\infty}\left(Q_k(\alpha, 0)-\tilde{f}(\alpha, 0)\right)\left(Q_k(\alpha, 0)-Q_{k-1}^{+}(\alpha, 0)\right)d\alpha=0. 
\end{equation*} 
Considering \(\varphi(-1)\), we obtain the conclusion. 
\end{proof}

In addition, the following corollary holds consequently. 

\begin{corollary}
If \(Q_k\) is the solution of subproblem (\ref{subproblemofqk}), then for quadratic function \(f\), it holds that 
\begin{equation}
\label{proj-2}
\begin{aligned}
\int_{-\infty}^{\infty}\left(Q_k(\alpha, 0)-\tilde{f}(\alpha, 0)\right)^2d\alpha\le \int_{-\infty}^{\infty}\left(Q_{k-1}^{+}(\alpha, 0)-\tilde{f}(\alpha, 0)\right)^2d\alpha, 
\end{aligned}
\end{equation}
where \(\tilde{f}=f\circ \T_{\dd_1^{(k)},\dd_2^{(k)}}^{(k)}\). 
\end{corollary}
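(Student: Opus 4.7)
The plan is to read off the corollary as an immediate consequence of the preceding theorem; essentially no new ideas are required. Specifically, I would invoke identity (\ref{proj-1}), which expresses the $L^{2}$-distance between $Q_{k}$ and $\tilde{f}$ along the first coordinate as the corresponding distance for $Q_{k-1}^{+}$ minus the $L^{2}$-norm of the updating difference $Q_k(\alpha,0) - Q_{k-1}^{+}(\alpha,0)$.

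The key observation is then that the subtracted integral
\[
\int_{-\infty}^{\infty}\left(Q_k(\alpha, 0)-Q_{k-1}^{+}(\alpha, 0)\right)^2 d\alpha
\]
is the integral of a pointwise non-negative function, hence itself non-negative. Dropping this non-negative quantity from the right-hand side of (\ref{proj-1}) can only increase (or preserve) its value, which immediately yields the inequality (\ref{proj-2}) claimed in the corollary. Concretely, the argument is one line: starting from (\ref{proj-1}), replace the last subtracted term by $0$ and use ``$-(\text{non-negative}) \le 0$.''

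There is no substantive obstacle here, since the heavy lifting has already been done in the preceding theorem. The only subtle point one might flag is the question of whether the improper integrals are finite, given that the integrands are squares of differences of quadratic functions in $\alpha$. However, this integrability issue is already absorbed into the hypotheses of the preceding theorem (which the corollary explicitly inherits by assuming $Q_k$ solves subproblem (\ref{subproblemofqk}) for quadratic $f$), so no further justification is needed beyond the monotonicity observation above.
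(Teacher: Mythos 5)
Your argument is exactly the paper's: apply identity (\ref{proj-1}) and note that the subtracted term $\int_{-\infty}^{\infty}\left(Q_k(\alpha,0)-Q_{k-1}^{+}(\alpha,0)\right)^2 d\alpha$ is non-negative. The proposal is correct and matches the paper's proof.
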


\begin{proof}
The inequality (\ref{proj-2}) holds according to the equality (\ref{proj-1}) and the fact
\[
\int_{-\infty}^{\infty}\left(Q_k(\alpha, 0)-Q_{k-1}^{+}(\alpha, 0)\right)^2d\alpha\ge 0. 
\] 
\end{proof}

The above theorem and corollary about the projection property show the result that the model
\(Q_k\) given by our method \texttt{2D-MoSub} has a better approximation along the direction \(\dd_1^{(k)}\). 

}

In the following, we first present the decrement of the function value of \texttt{2D-MoSub}.

\begin{proposition}
The function values of the iteration points given by the method follows the decreasing relationship, which is
\[
f(\bx_{k+1})\le f(\bx_k), 
\]
and it holds that 
\[
f(\bx_{k+1})\le f(\bx_k)-\eta (Q_k(0,0)-Q_k(\T^{(k)}_{\dd_1^{(k)},\dd_2^{(k)}}(\bx_{k+1})))
\]
at the successful step.

\end{proposition}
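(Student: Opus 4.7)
The plan is to establish both claims by a case analysis tracking the branches of Step 3 of Algorithm \ref{A new subspace derivative-free optimization method}. For the monotonicity $f(\x_{k+1}) \le f(\x_k)$, I first observe that by inspection of Step 3 the iterate $\x_{k+1}$ is always assigned either the value $\x_k$ (in the three branches labelled ``Otherwise, let $\x_{k+1}=\x_k$'' and the two early-exit branches that detect $\x_k^+ \in \{\x_k,\x_{k-1}\}$) or the value $\x_k^+$. In the former case the inequality is trivial. In the latter case I would invoke the defining relation $\x_k^+=\arg\min_{\x\in\{\x_k,\x_k^{\rm pre},\y_1^{(k)},\y_2^{(k)},\y_3^{(k)}\}}f(\x)$; since $\x_k$ lies inside the set over which the minimum is taken, $f(\x_k^+)\le f(\x_k)$. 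When $\x_k^+$ is subsequently refreshed by $\arg\min_{\x\in\{\x_k^+,\x_k^{\rm mod}\}}f(\x)$ through the modified-model branch, the bound is preserved since it only replaces $\x_k^+$ by a point with an even smaller function value.

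For the sufficient-decrease inequality, I would split the successful step into two geometrically distinct subcases. In the first, $\x_{k+1}=\x_k^{\rm pre}$: because $(0,0)$ is feasible for the trust-region subproblem that produced $\x_k^{\rm pre}$, the model decrease $Q_k(0,0)-Q_k(\T^{(k)}_{\dd_1^{(k)},\dd_2^{(k)}}(\x_{k+1}))\ge 0$. The hypothesis $\rho_k\ge\eta$ together with formula (\ref{ratioofsuccessfulstep}) and the (strictly negative) denominator then yields $f(\x_{k+1})-f(\x_k)\le\eta\bigl(Q_k(\T^{(k)}_{\dd_1^{(k)},\dd_2^{(k)}}(\x_{k+1}))-Q_k(0,0)\bigr)$ after multiplying through, which is the claim. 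In the second subcase, $\x_{k+1}\in\{\y_1^{(k)},\y_2^{(k)},\y_3^{(k)}\}$, the interpolation conditions (\ref{interpolation-cond}) give $Q_k(\T^{(k)}_{\dd_1^{(k)},\dd_2^{(k)}}(\x_{k+1}))=f(\x_{k+1})$, while $Q_k(0,0)=f(\x_k)$ by construction of $Q_k$ in (\ref{model}). Hence the model-value drop coincides with the function-value drop and the target inequality collapses to $(1-\eta)(f(\x_{k+1})-f(\x_k))\le 0$, which holds thanks to the already proven monotonicity and the standard trust-region convention $\eta\in(0,1]$. The modified-model branch is handled analogously, replacing $\eta$ with $\eta_0$.

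The main obstacle I anticipate is the degenerate boundary case $Q_k(0,0)-Q_k(\T^{(k)}_{\dd_1^{(k)},\dd_2^{(k)}}(\x_{k+1}))=0$, in which $\rho_k$ as defined by (\ref{ratioofsuccessfulstep}) becomes $0/0$ and the sign-reversal argument of the first subcase breaks. I would dispatch this case by observing that a zero model decrease forces $\x_k^{\rm pre}=\x_k$ to be a minimizer of $Q_k$ on the ball, so any $\x_k^+$ chosen from the candidate set either equals $\x_k$ (leading back to $\x_{k+1}=\x_k$ and both inequalities becoming equalities) or lies in $\{\y_1^{(k)},\y_2^{(k)},\y_3^{(k)}\}$, which is already covered by the second subcase. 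Collecting the subcases closes the proof.
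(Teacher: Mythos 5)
Your proposal is correct and follows the same route as the paper: the paper's own ``proof'' is a single sentence asserting that the claim is immediate from the algorithmic framework of Step 3 and the acceptance criterion based on $\rho_k$ in (\ref{ratioofsuccessfulstep}), which is exactly the branch-by-branch bookkeeping you carry out in detail. Your version simply makes explicit (and correctly handles) the points the paper leaves implicit, namely that $\x_k$ always belongs to the candidate set defining $\x_k^{+}$, that the model decrease is nonpositive by feasibility of $(0,0)$, and the interpolation-point and degenerate-denominator subcases.
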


\begin{proof}
The proof is straightforward according to the algorithmic framework and the criterion related to (\ref{ratioofsuccessfulstep}). 
\end{proof}

The following results reveal the convergence of \texttt{2D-MoSub}. 
\begin{assumption}
\label{assumption-convergence-subspace} 
The objective function $f$ is bounded from below, twice continuously differentiable, and its second-order derivative is bounded. Let $C$ represent an upper bound on $\left\Vert \nabla^2 f\right\Vert$. 
There exists an infinite set $\mathcal{K}\subseteq\mathbb{N}^+$ such that 

\noindent 
1) there exists \(\varepsilon_1 > 0\)  such that   
\( 
\left\|\bm{P}_k\nabla f(\bx_k)\right\|_2 \geq \varepsilon_1 \left\|\nabla f(\bx_k)\right\|_2 
\) for \(k \in \mathcal{K}\), 
where \(\bm{P}_k\) is the orthogonal projection from \({\Re}^n\) to the 2-dimensional subspace \(\mathcal{S}_{\dd_1^{(k)},\dd_2^{(k)}}^{(k)}\); 

\noindent 
2) $f(\bx_{k+1})-\inf_{\dd \in \mathcal{S}_{\dd_1^{(k)},\dd_2^{(k)}}^{(k)}} f(\bx_k+\dd) \rightarrow 0$ as $k \in \mathcal{K}$ and $k \rightarrow \infty$.
\end{assumption}

The following theorem follows and extends Theorem 5.7 in Zhang's thesis \cite{zhang012} {\color{black}(with details in its Section 5)}.

\begin{theorem}
If Assumption \ref{assumption-convergence-subspace} holds for the objective function \(f\) and \texttt{2D-MoSub}, then 
\begin{equation*}
\liminf_{k \rightarrow \infty}\left\|\nabla f(\bx_k)\right\|_2=0,
\end{equation*}
where each \(\bx_k\) is an iteration point generated by \texttt{2D-MoSub}. 
\end{theorem}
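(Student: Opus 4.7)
The plan is to argue by contradiction. Suppose $\liminf_{k\to\infty}\|\nabla f(\x_k)\|_2>0$. Then there exist $\varepsilon>0$ and $k_0$ such that $\|\nabla f(\x_k)\|_2\geq\varepsilon$ for all $k\geq k_0$. Combining this with part 1) of Assumption \ref{assumption-convergence-subspace}, we obtain $\|\bm{P}_k\nabla f(\x_k)\|_2\geq \varepsilon_1\varepsilon$ for every sufficiently large $k\in\mathcal{K}$. The goal is to turn this persistent projected-gradient lower bound into a uniform per-iteration decrease of $f$ along the subset $\mathcal{K}$, and then invoke the lower boundedness of $f$ to produce a contradiction.

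First I would quantify the best achievable decrease inside the current 2-dimensional subspace. Since $\bm{P}_k\nabla f(\x_k)\in\mathcal{S}_{\y_1^{(k)},\y_2^{(k)}}^{(k)}-\x_k$, the direction $\dd_k=-t\,\bm{P}_k\nabla f(\x_k)/\|\bm{P}_k\nabla f(\x_k)\|_2$ lies in the subspace for any $t\geq0$. Using the Taylor expansion with integral remainder, the boundedness of $\nabla^2 f$ by $M$, and the identity $\nabla f(\x_k)^\top \bm{P}_k\nabla f(\x_k)=\|\bm{P}_k\nabla f(\x_k)\|_2^2$ (because $\bm{P}_k$ is the orthogonal projection), one obtains
\begin{equation*}
f(\x_k+\dd_k)\leq f(\x_k)-t\,\|\bm{P}_k\nabla f(\x_k)\|_2+\tfrac{M}{2}t^{2}.
\end{equation*}
Minimizing the right-hand side in $t$ gives
\begin{equation*}
\inf_{\dd\in\mathcal{S}_{\y_1^{(k)},\y_2^{(k)}}^{(k)}-\x_k}f(\x_k+\dd)\;\leq\; f(\x_k)-\frac{\|\bm{P}_k\nabla f(\x_k)\|_2^{2}}{2M}\;\leq\; f(\x_k)-\frac{(\varepsilon_1\varepsilon)^{2}}{2M}
\end{equation*}
for every sufficiently large $k\in\mathcal{K}$.

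Next I would couple this with part 2) of the assumption. Since $f(\x_{k+1})-\inf_{\dd\in\mathcal{S}_{\y_1^{(k)},\y_2^{(k)}}^{(k)}}f(\x_k+\dd)\to 0$ along $\mathcal{K}$, for all $k\in\mathcal{K}$ large enough this gap is less than $(\varepsilon_1\varepsilon)^{2}/(4M)$, and combining with the previous display yields
\begin{equation*}
f(\x_k)-f(\x_{k+1})\;\geq\;\frac{(\varepsilon_1\varepsilon)^{2}}{4M}\;>\;0.
\end{equation*}
Since the proposition immediately preceding the theorem ensures that $\{f(\x_k)\}$ is non-increasing on the whole sequence, summing this positive lower bound over infinitely many $k\in\mathcal{K}$ contradicts the lower boundedness of $f$, completing the argument.

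The main obstacle in my view is not the Taylor/subspace calculation but the verification that the two parts of Assumption \ref{assumption-convergence-subspace} interact correctly with the algorithm's machinery; once those assumptions are granted, the proof is a fairly standard Zoutendijk-style accounting of monotone decrease against a uniform floor. In particular one must be careful that the "per-step decrease" estimate is activated only along the subsequence $\mathcal{K}$, whereas the monotonicity of $\{f(\x_k)\}$ is needed on all indices to guarantee that the contributions from $k\in\mathcal{K}$ actually accumulate in the telescoping sum.
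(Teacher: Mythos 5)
Your proof is correct and follows essentially the same route as the paper's: argue by contradiction, use Assumption 1 to get a uniform lower bound on \(\left\|\bm{P}_k\nabla f(\x_k)\right\|_2\), apply the subspace decrease estimate \(f(\x_k)-\inf_{\dd\in\mathcal{S}}f(\x_k+\dd)\ge\frac{1}{2M}\left\|\bm{P}_k\nabla f(\x_k)\right\|_2^2\) together with Assumption 2 to obtain a per-step decrease of at least \((\varepsilon_1\varepsilon)^2/(4M)\) along the subsequence, and contradict the lower boundedness of \(f\). The only differences are cosmetic: you derive the subspace decrease bound directly from Taylor's theorem whereas the paper cites it as Lemma 5.5 of Zhang's thesis, and you make explicit the monotonicity of \(\{f(\x_k)\}\) needed for the telescoping sum over the subsequence, a point the paper leaves implicit.
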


\begin{proof} 
The proof presented here follows and extends the proof of Theorem 5.7 in Zhang's thesis \cite{zhang012}. We want to prove that 
\(
\left\|\nabla f(\bx_k)\right\|_2 \rightarrow 0\), as \(k \in \mathcal{K}\) and \(k \rightarrow \infty\). We prove it by contradiction. Suppose it does not hold, and there exists \(\varepsilon_2 > 0\) and an infinite subset \(\overline{\mathcal{K}}\) of \(\mathcal{K}\) such that 
\(\left\|\nabla f(\bx_k)\right\|_2 \geq \varepsilon_2\), for \(k \in \overline{\mathcal{K}}\). According to Assumption \ref{assumption-convergence-subspace}, w.l.o.g., it can be assumed that for \(k \in \overline{\mathcal{K}}\), 
\begin{equation*}
\left\|\bm{P}_k\nabla f(\bx_k)\right\|_2 \geq \varepsilon_1 \left\|\nabla f(\bx_k)\right\|_2, 
\end{equation*} 
and 
\begin{equation}
\label{solutionquality}
f(\bx_{k+1})-\inf _{x \in \mathcal{S}_{\dd_1^{(k)},\dd_2^{(k)}}^{(k)}} f(\bx_k+\dd) \leq \frac{\varepsilon_1^2\varepsilon_2^2}{4 C}. 
\end{equation} 
For any \(k \in \overline{\mathcal{K}}\), it then holds that 
\begin{equation*}
\begin{aligned}
 &f(\bx_k)-f(\bx_{k+1})\\ 
= & {\left(f(\bx_k)-\inf _{\dd \in \mathcal{S}_{\dd_1^{(k)},\dd_2^{(k)}}^{(k)}} f(\bx_k+\dd)\right)-\left(f(\bx_{k+1})-\inf_{\dd \in \mathcal{S}_{\dd_1^{(k)},\dd_2^{(k)}}^{(k)}} f(\bx_k+\dd)\right)} \\
\geq & \frac{1}{2 C}\left\|\bm{P}_k \nabla f(\bx_k)\right\|_2^2-\frac{\varepsilon_1^2\varepsilon_2^2}{4 C}\\  
\geq & \frac{\varepsilon_1^2\varepsilon_2^2}{2 C}-\frac{\varepsilon_1^2\varepsilon_2^2}{4 C} \\
= & \frac{\varepsilon_1^2\varepsilon_2^2}{4 C},
\end{aligned}
\end{equation*} 
where the inequality in the third row follows (\ref{solutionquality}) and Lemma 5.5 in Zhang's thesis \cite{zhang012}. This contradicts the facts that \(\overline{\mathcal{K}}\) is an infinite subset and \(f\) is lower bounded. 
\end{proof}

Notice that Assumption \ref{assumption-convergence-subspace} is a sufficient condition for the convergence of our \texttt{2D-MoSub} method. We present the result of Lemma 5.5 in Zhang's thesis \cite{zhang012} as a remark.

\begin{remark}
Suppose that the objective function $f$ is bounded from below, twice continuously differentiable, and its second-order derivative is bounded. Let $C$ represent an upper bound on $\left\Vert \nabla^2 f\right\Vert$, and \(\mathcal{S}\) is a subspace of \(\Re^n\). Then it holds that  
\begin{equation*}
f(\bx)-\inf _{\bm{d} \in \mathcal{S}} f(\bx+\bm{d}) \geq \frac{1}{2 C}\|\bm{P} \nabla f(\bx)\|_2^2,
\end{equation*}
where $\bm{P}$ is the orthogonal projection from $\Re^n$ to $\mathcal{S}$. 
\end{remark}

\section{Numerical results}

\label{section:Numerical results}

We provide experimental results demonstrating the algorithm's performance on various optimization problems. The results include comparisons with existing methods, showcasing the efficiency and effectiveness of the proposed approach. Table \ref{Parameter settings} shows the parameter settings when testing our method. 

\begin{table}[htbp]
\centering
\caption{Parameter settings of \texttt{2D-MoSub}\label{Parameter settings}}
\begin{tabular}{ccl}
\toprule
{Parameter} & {Value} & {Description} \\
\midrule 
$\Delta_1$ & $1$ & Initial trust-region radius \\
$\Delta_{\text{low}}$ & $1 \times 10^{-4}$ & Lower bound on trust-region radius \\
$\Delta_{\text{upper}}$ & $1 \times 10^4$ & Upper bound on trust-region radius \\
$\gamma_1$ & $10$ & Increase factor for trust-region radius \\
$\gamma_2$ & $0.1$ & Decrease factor for trust-region radius \\
$\eta$ & $0.2$ & Threshold for successful step \\
 $\eta_0$ & $0.1$ & Threshold for {modified} successful step \\
\(\dd^{(1)}\) & \((1,0,\dots,0)\) & An initial direction \\ 
\bottomrule
\end{tabular}
\end{table}

To show the general numerical behavior of our subspace method, we try to solve the classic test problems and present the numerical results using the criterions called performance profile \cite{dolan2002benchmarking,audet2017derivative} and data profile \cite{BenchmarkingDFO,audet2017derivative}. The test problems with results in Fig. \ref{fig-perf} and Fig. \ref{fig-data} are shown in Table \ref{table5}, and they are from classic and common unconstrained optimization test functions collections including CUTEr/CUTEst \cite{CUTEr,gould2015cutest}.

\begin{table}[htbp]\ttfamily
  \centering   
    \caption{Test problems\label{table5}} 
      \setlength\tabcolsep{1.1pt}
    \fontsize{10}{10}\selectfont
  \begin{tabular}{llllllll}  
    \toprule  
    ARGLINA & ARGLINA4 & ARGLINB & ARGLINC & ARGTRIG & ARWHEAD &  
BDQRTIC &BDQRTICP \\
BDVALUE & BROWNAL & BROYDN3D & BROYDN7D &
BRYBND & CHAINWOO & CHEBQUAD & CHNROSNB \\
 CHPOWELLB & CHPOWELLS &
 CHROSEN & COSINE &
 CRAGGLVY &
 CUBE & CURLY10 & CURLY20 \\
CURLY30 & DIXMAANE &
 DIXMAANF & DIXMAANG &
 DIXMAANH & DIXMAANI &
DIXMAANJ & DIXMAANK \\
DIXMAANL & DIXMAANM & DIXMAANN &
 DIXMAANO &
  DIXMAANP & DQRTIC& 
  EDENSCH & ENGVAL1 \\
ERRINROS & EXPSUM &
EXTROSNB & EXTTET &
 FIROSE & FLETCBV2 & FLETCBV3 & FLETCHCR \\
FREUROTH & GENBROWN &
 GENHUMPS & GENROSE & INDEF & INTEGREQ &
  LIARWHD & LILIFUN3 \\
LILIFUN4 & MOREBV & MOREBVL & NCB20 &
 NCB20B & NONCVXU2 &
 NONCVXUN &
 NONDIA \\
NONDQUAR & PENALTY1 &
 PENALTY2 & PENALTY3 &
 PENALTY3P & POWELLSG &
 POWER & ROSENBROCK \\
SBRYBND & SBRYBNDL&
  SCHMVETT & SCOSINE & SCOSINEL &
 SEROSE &
SINQUAD & SPARSINE \\
SPARSQUR & SPMSRTLS & SROSENBR & STMOD &
 TOINTGSS & TOINTTRIG &
  TQUARTIC & TRIGSABS \\
TRIGSSQS & TRIROSE1 &
 TRIROSE2 & VARDIM & WOODS& - & - & - \\ 
    \bottomrule   
  \end{tabular}
\end{table}

The performance profile and data profile describe the number of function evaluations taken by the algorithm in the algorithm set \(\mathcal{A}\) to achieve a given accuracy when solving problems in a given problem set.

We define the value 
\[
f_{\mathrm{acc}}^{N}=\frac{f(\bx_{N})-f(\bx_{\text{int}})}{f(\bx_{\text{best}})-f(\bx_{\text{int}})} \in [0,1],
\] 
and the tolerance \(\tau \in [0,1]\), where \(\bx_{N}\) denotes the best point found by the algorithm after \(N\) function evaluations, \(\bx_{\text{int}}\) denotes the initial point, and \(\bx_{\text{best}}\) denotes the best known solution. When \(f_{\mathrm{acc}}^{N} \ge 1-\tau\),  we say that the solution reaches the accuracy \(\tau\). We give \(N_{s,p}=\min\{n \in \mathbb{N},\ f_{\mathrm{acc}}^{n}\ge 1-\tau \}\) and the definitions that
\begin{equation*}
\begin{aligned}
&T_{s, p}=\left\{\begin{aligned}
& 1,  \text{ if} \ f_{\mathrm{acc}}^{N} \geq 1-\tau \ \text{for some } N,\\
& 0,  \text{ otherwise},
\end{aligned}\right.
\end{aligned}
\end{equation*}
and
\begin{equation*}
\begin{aligned}
&r_{s, p}=\left\{\begin{aligned}
&\frac{N_{s, p}}{\min \left\{N_{\tilde{s}, p}: \tilde{s} \in \mathcal{A},\ T_{\tilde{s}, p}=1\right\}}, \text{ if} \ T_{s, p}=1, \\
&+\infty, \text{ otherwise},
\end{aligned}\right.
\end{aligned}
\end{equation*}
where \(s\) is the given solver or algorithm. For the given tolerance \(\tau\) and a certain problem \(p\) in the problem set \(\mathcal{P}\), the parameter \(r_{s, p}\) shows the ratio of the number of the function evaluations using the solver \(s\) divided by that using the fastest algorithm on the problem \(p\). 

In the performance profile, 
\[
\pi_{s}(\alpha)=\frac{1}{\vert\mathcal{P}\vert}\left\vert\left\{p \in \mathcal{P}: r_{s, p} \leq \alpha\right\}\right\vert, 
\] 
where \(\alpha\) refers to \(\frac{\rm NF}{{\rm NF}_{\text{min}}} \in [1, +\infty)\) in Fig. \ref{fig-perf}, and \(\vert\cdot\vert\) denotes the cardinality.

Notice that NF denotes the corresponding number of function evaluations solving test problems with an algorithm, and ${\rm NF}_{\text{min}}$ denotes the minimum number of function evaluations among all of the algorithms.

In the data profile, 
\[ 
\delta_s({\beta})=\frac{1}{\vert \mathcal{P}\vert}\left\vert \left\{p \in \mathcal{P}: N_{s, p} \leq {\beta}\left(n+1\right) T_{s, p}\right\}\right\vert. 
\] 

Notice that a higher value of \(\pi_s(\alpha)\) or \(\delta_s({\beta})\) represents solving more problems successfully with the restriction of the function evaluations. The dimension here is in the range of 20 to 20000. Besides, the algorithms start from the same corresponding initial point $\bx_{\text{int}}$, and the tolerance \(\tau\) is set as \(1\%\) in the profile. We compare \texttt{2D-MoSub} with the methods \texttt{Nelder-Mead} \cite{lagarias1998convergence}, \texttt{NEWUOA} \cite{powell2006newuoa}, \texttt{DFBGN} \cite{cartis2023scalable} and \texttt{CMA-ES} \cite{HansenCMA-ES}.

\begin{figure}[H]
\centering 
\includegraphics[width=0.9\textwidth]{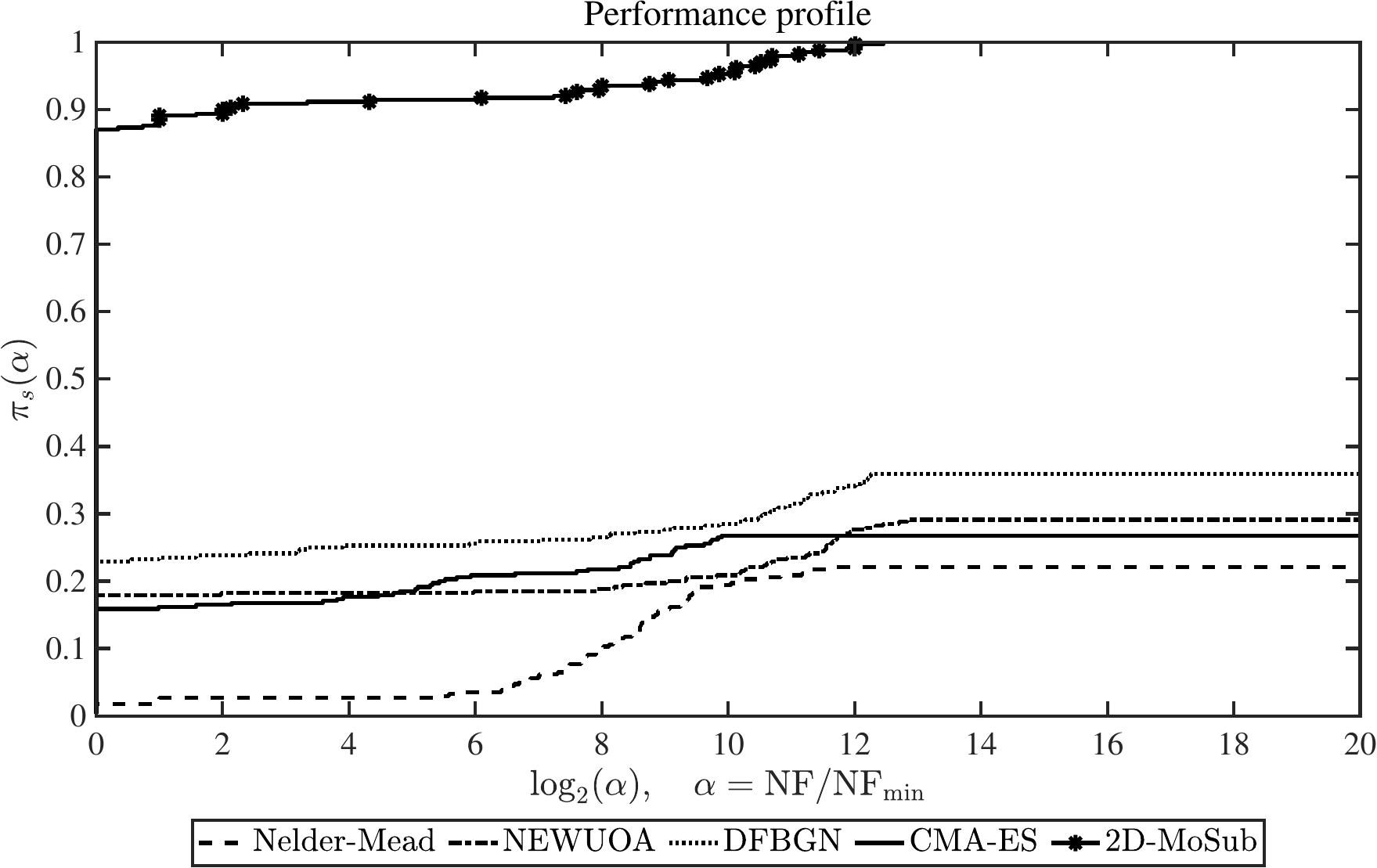} 
\caption{Performance profile of solving test large-scale problems\label{fig-perf}} 
\end{figure}

\begin{figure}[H]
\centering 
\includegraphics[width=0.9\textwidth]{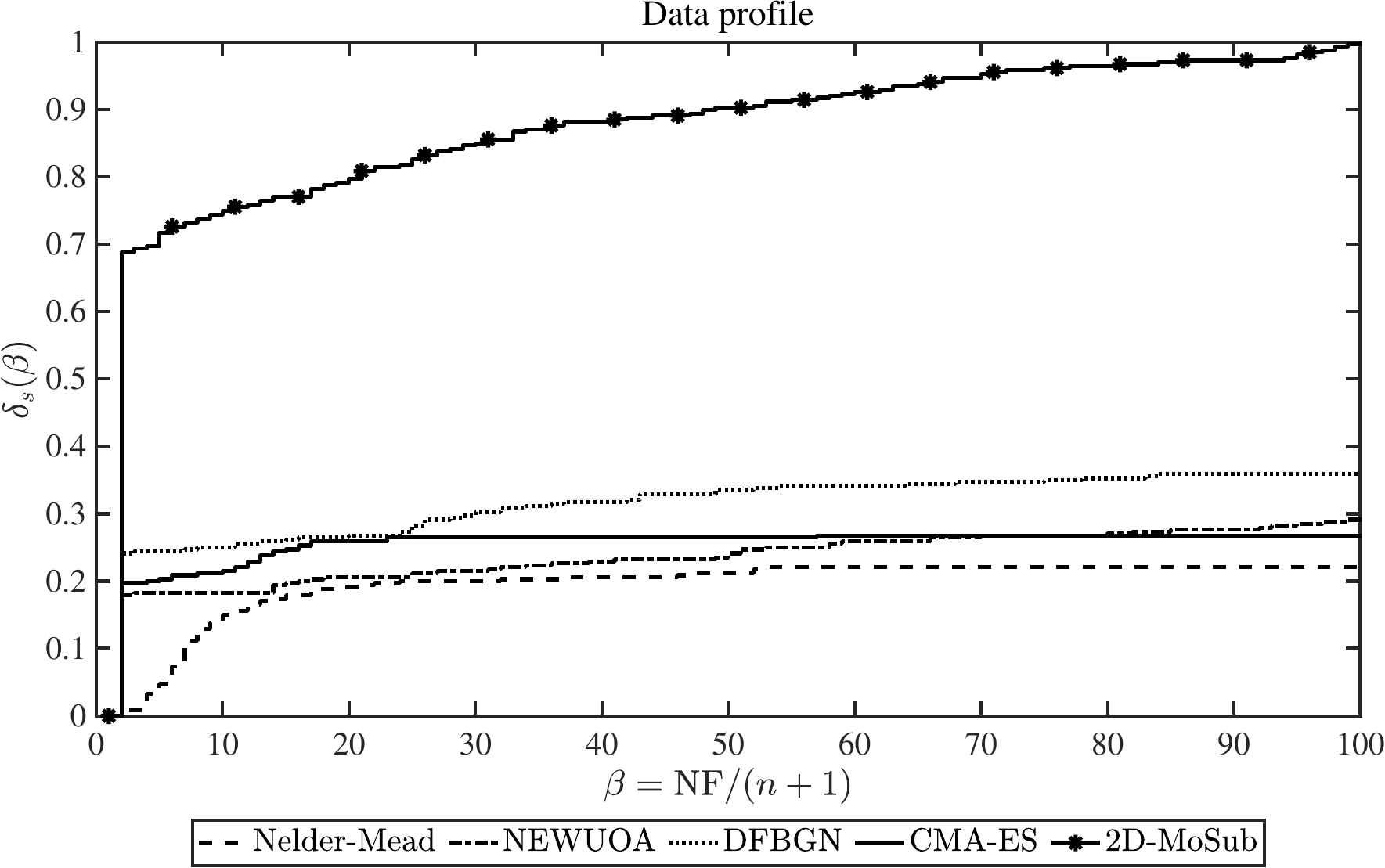}
\caption{Data profile of solving test large-scale problems\label{fig-data}} 
\end{figure}

We can observe from Fig. \ref{fig-perf} and Fig. \ref{fig-data} that \texttt{2D-MoSub} can solve most problems better than the other algorithms. 
{A fact is that our method can rapidly reach an iteration point with an error lower than 1\%-error to the best solution.} 
This shows the obvious advantages of our method.

\section{Conclusion}
We propose the new derivative-free optimization method \texttt{2D-MoSub} based on the trust-region method and subspace techniques for large-scale derivative-free problems. Our method solves 2-dimensional trust-region subproblems during the iteration. Besides, we define the 2-dimensional subspace \(\Lambda\)-poisedness for our interpolation set in the 2-dimensional subspace with 3 determined coefficients at the \(k\)-th step. We give the algorithm's main steps and analyze its theoretical properties. The numerical test shows the numerical advantages of solving derivative-free optimization problems. The future work includes designing a new strategy for choosing the subspace and the research on large-scale constrained problems.


\bibliographystyle{tfs}
\bibliography{interacttfssample}

\begin{thebibliography}{10}
\providecommand{\MR}{\relax\unskip\space MR }
\providecommand{\url}[1]{\normalfont{#1}}
\providecommand{\urlprefix}{Available at }

\bibitem{audet2017derivative}
C. Audet and W. Hare, \emph{Derivative-free and blackbox optimization},
  Springer, Heidelberg, 2017.

\bibitem{Mattias2000}
M. Bj\"{o}rkman and K. Holmstr\"{o}m, \emph{Global optimization of costly
  nonconvex functions using radial basis functions}, Optim. Eng. 1 (2000), pp.
  373--397.

\bibitem{cartis2019improving}
C. Cartis, J. Fiala, B. Marteau, and L. Roberts, \emph{Improving the
  flexibility and robustness of model-based derivative-free optimization
  solvers}, ACM Trans. Math. 45 (2019), pp. 1--41.

\bibitem{cartis2023scalable}
C. Cartis and L. Roberts, \emph{Scalable subspace methods for derivative-free
  nonlinear least-squares optimization}, Math. Program. 199 (2023), pp.
  461--524.

\bibitem{Conn2009a}
A.R. Conn, K. Scheinberg, and L.N. Vicente, \emph{Global convergence of general
  derivative-free trust-region algorithms to first- and second-order critical
  points}, SIAM J. Optim. 20 (2009), pp. 387--415.

\bibitem{conn2009introduction}
A.R. Conn, K. Scheinberg, and L.N. Vicente, \emph{Introduction to
  derivative-free optimization}, SIAM, Philadelphia, 2009.

\bibitem{dolan2002benchmarking}
E.D. Dolan and J.J. Mor{\'e}, \emph{Benchmarking optimization software with
  performance profiles}, Math. Program. 91 (2002), pp. 201--213.

\bibitem{CUTEr}
N.I.M. Gould, D. Orban, and {\relax Ph.L}. Toint, \emph{{CUTEr} and {SifDec}: A
  constrained and unconstrained testing environment, revisited}, ACM Trans.
  Math. Softw. 29 (2003), p. 373–394.

\bibitem{gould2015cutest}
N.I.M. Gould, D. Orban, and {\relax Ph.L}. Toint, \emph{{CUTEst}: a constrained
  and unconstrained testing environment with safe threads for mathematical
  optimization}, Comput. Optim. Appl. 60 (2015), pp. 545--557.

\bibitem{Gratton-2017}
S. Gratton, C.W. Royer, L.N. Vicente, and Z. Zhang, \emph{{Complexity and
  global rates of trust-region methods based on probabilistic models}}, IMA J.
  Numer. Anal. 38 (2017), pp. 1579--1597.

\bibitem{HansenCMA-ES}
N. Hansen, S.D. Müller, and P. Koumoutsakos, \emph{Reducing the time
  complexity of the derandomized evolution strategy with covariance matrix
  adaptation ({CMA-ES})}, Evol. Comput. 11 (2003), pp. 1--18.

\bibitem{lagarias1998convergence}
J.C. Lagarias, J.A. Reeds, M.H. Wright, and P.E. Wright, \emph{Convergence
  properties of the {Nelder-Mead} simplex method in low dimensions}, SIAM J.
  Optim. 9 (1998), pp. 112--147.

\bibitem{larson_menickelly_wild_2019}
J. Larson, M. Menickelly, and S.M. Wild, \emph{Derivative-free optimization
  methods}, Acta Numer. 28 (2019), p. 287–404.

\bibitem{BenchmarkingDFO}
J.J. Mor\'{e} and S.M. Wild, \emph{Benchmarking derivative-free optimization
  algorithms}, SIAM J. Optim. 20 (2009), pp. 172--191.

\bibitem{Powell2003}
M.J.D. Powell, \emph{On trust region methods for unconstrained minimization
  without derivatives}, Math. Program. 97 (2003), pp. 605--623.

\bibitem{powell2006newuoa}
M.J.D. Powell, \emph{The {NEWUOA} software for unconstrained optimization
  without derivatives}, in \emph{Large-scale Nonlinear Optimization}, G. Pillo
  and M. Roma, eds., Springer, Boston,  2006, pp. 255--297.

\bibitem{steihaug1983conjugate}
T. Steihaug, \emph{The conjugate gradient method and trust regions in large
  scale optimization}, SIAM J. Numer. Anal. 20 (1983), pp. 626--637.

\bibitem{toint1981towards}
{\relax Ph.L}. Toint, \emph{Towards an efficient sparsity exploiting newton
  method for minimization}, in \emph{Sparse matrices and their uses}, Academic
  Press,  1981, pp. 57--88.

\bibitem{Winfield1969}
D. Winfield, \emph{Function and functional optimization by interpolation in
  data tables}, Ph.D. diss., Harvard University,  1969.

\bibitem{xie2023h2}
P. Xie and Y.x. Yuan, \emph{Least {$H^2$} norm updating quadratic interpolation
  model function for derivative-free trust-region algorithms}, arXiv:
  2302.12017  (2023).

\bibitem{yuan2007subspace}
Y.x. Yuan, \emph{Subspace techniques for nonlinear optimization}, in \emph{Some
  Topics in Industrial and Applied Mathematics}, R. Jeltsch, D.Q. Li, and I.H.
  Sloan, eds., Higher Education Press, Beijing,  2007, pp. 206--218.

\bibitem{yuan2009subspace}
Y.x. Yuan, \emph{Subspace methods for large scale nonlinear equations and
  nonlinear least squares}, Optim. Eng. 10 (2009), pp. 207--218.

\bibitem{yuan2014review}
Y.x. Yuan, \emph{A review on subspace methods for nonlinear optimization}, in
  \emph{Proceedings of the International Congress of Mathematics}. 2014, pp.
  807--827.

\bibitem{zhang012}
Z. Zhang, \emph{The research on derivative-free optimization methods}, Ph.D.
  diss., Graduate School of the Chinese Academy of Sciences, University of
  Chinese Academy of Sciences,  2012.

\bibitem{zhang2021}
Z. Zhang, \emph{Derivative-free optimization}, in \emph{China Discipline
  Development Strategy: Mathematical optimization (in Chinese)}, Science Press,
  Beijing,  2021, pp. 84--92.

\end{thebibliography}

\end{document}